\newtheorem{thm}{Theorem}[section]
\newtheorem{cor}[thm]{Corollary}
\newtheorem{lem}[thm]{Lemma}
\newtheorem*{thm*}{Theorem}
\newenvironment{customthm}[1]
  {\innercustomthm}
  {\endinnercustomthm}
\newtheorem{defn}{Definition}[section]
\newtheorem{example}{Example}[section]
\newcommand{\thmref}[1]{Theorem~\ref{#1}}
\newcommand{\lemref}[1]{Lemma~\ref{#1}}
\newcommand{\dlim}{\displaystyle \lim\limits}
\newcommand{\dsum}{\displaystyle \sum\limits}
\newcommand{\dprod}{\displaystyle \prod\limits}
\def\S{\mathcal S}
\def\C{\mathcal C}
\def\Re{\mathcal R}
\def\R{\mathbb R}
\def\L{\mathcal L}
\def\Z{\mathbb Z}
\def\A{\mathcal A}
\def\Td1{T^{D,1}_{\{x_n\}}}
\def\Ts1{T^{S,1}_{\{x_n\}}}
\def\Des{D_{\{x_n\}}}
\def\lmt{\lim_{n\rightarrow \infty}}
\def\Tsinf{T^{S,\infty}_{\{x_n\}}}
\def\Tdi{T^{D,i}_{\{x_n\}}}
\newcommand*\colvec[1]{
        \global\colveccount#1
        \begin{pmatrix}
        \colvecnext
}
\def\colvecnext#1{
        #1
        \global\advance\colveccount-1
        \ifnum\colveccount>0
                \\
                \expandafter\colvecnext
        \else
                \end{pmatrix}
        \fi
}
\title{Some network conditions for positive recurrence  of stochastically modeled reaction networks}
\author{
David F. Anderson\thanks{Department of Mathematics, University of
  Wisconsin, Madison, USA.  anderson@math.wisc.edu, grant support from NSF-DMS-1318832 and Army Research Office grant W911NF-14-1-0401.},
\and
Jinsu Kim\thanks{Department of Mathematics, University of
  Wisconsin, Madison, USA.  jskim@math.wisc.edu, grant support from NSF-DMS-1318832 and Army Research Office grant W911NF-14-1-0401.}
  }
\begin{document}

\maketitle

\begin{abstract}
We consider discrete-space continuous-time Markov models of reaction networks and provide sufficient conditions for the following stability condition to hold: each state in a closed, irreducible component of the state space is positive recurrent; moreover the time required  for a trajectory to enter such a component has finite expectation. 
The provided analytical results depend solely on the underlying structure of the reaction network and not on the specific choice of model parameters.   Our main results apply to binary systems and our main analytical tool is the ``tier structure'' previously utilized successfully in the study of deterministic models of reaction networks.  

\end{abstract}

\section{Introduction}

Mathematical models of reaction networks generally fall into two categories.  When the counts of the constituent species are high a (usually nonlinear) set of ordinary differential equations is  used to model their concentrations.  However, when the abundances are low a continuous-time Markov chain is often used to model the counts of the different species.  In this paper we consider the stochastic model and provide conditions on the associated reaction network  that ensure the Markov chain satisfies the following: each state in a closed, irreducible component of the state space is positive recurrent; moreover, if  $\tau_{x_0}$ is the time for the process to enter the union of the closed irreducible components given an initial condition  $x_0$, then $\mathbb{E}[\tau_{x_0}] < \infty$.  Note, therefore, that even if the initial condition of the process is a transient state, the trajectory will eventually enter a positive recurrent component of the state space.   
 Importantly, the main analytical results provided here hold regardless of  the choice of rate parameters for the model, which are often unknown.

This work falls into the broad research area known as chemical reaction network theory, which dates back to at least \cite{Horn72,HornJack72} where graphical characteristics of networks were shown to ensure uniqueness and local asymptotic stability of the steady states for deterministically modeled complex-balanced systems.   Since that time, much of the focus of chemical reaction network theory has been related to discovering how the qualitative properties of deterministic models relate to their reaction networks \cite{AndGAC_ONE2011,AndBounded_ONE2011,Sontag2007,AMC-Badal2013,lotkareactions,
ChavesEduardo2002,Craciun2005,CraciunPantea,Feinberg72, Feinberg87,FeinbergLec79, feedbackvertex1,GMS13a, J-S1,signforinjectivity,Pantea2012}.  However, with the advent of new technologies   -- most notably fluorescent proteins -- there is now a large literature demonstrating that the fluctuations arising from the effective randomness of individual interactions in cellular systems can have significant consequences on the emergent behavior  of the system \cite{Arkin1998, Becskei2005, Elowitz2002, Paulsson2011,Maamar2007, Paulsson2004, ULPOSP2016}. Hence, analytical results related to  stochastic models are essential if these systems are to be well understood, and attention is shifting in their direction.

In the deterministic modeling regime there are a number of network conditions that guarantee some sort of stability for the model.  These conditions include weak reversibility and deficiency zero \cite{Feinberg72, Feinberg87,FeinbergLec79}, weak reversibility and a single linkage class \cite{AndGAC_ONE2011,AndBounded_ONE2011}, endotactic \cite{CraciunPantea}, strongly endotactic \cite{GMS13a}, tropically endotactic \cite{Jim_Gheorghe2017}, etc.  However, to the best of the authors' knowledge, in the stochastic context there is only one such result: in \cite{AndProdForm} a model whose reaction network is weakly reversible and has a deficiency of zero is shown to be positive recurrent (and the stationary distribution is characterized as a product of Poissons).   (The paper \cite{scalable2014} provides sufficient conditions for positive recurrence, but the provided conditions are analytical in nature and do not explicitly relate to the network structure of the model.)
Here we provide network conditions guaranteeing  stability for two  classes of binary models: (i) models that are weakly reversible, have a single linkage class, and have in-flows and out-flows, and (ii)  a new category of networks we term ``double-full.''  We will show that all states are positive recurrent for the first class of models.  We will show that for the second class of models the stability condition detailed in the opening paragraph holds: all states in closed, irreducible components of the state space are positive recurrent, and every trajectory enters such a component in finite time.    Our main analytic tools are Lyapunov functions and ideas related to ``tier structures'' as introduced in \cite{AndGAC_ONE2011,AndBounded_ONE2011}, and also utilized in \cite{GMS13a}.

 The outline of this paper is as follows. In Section \ref{sec:mathmodel}, we introduce the relevant mathematical model, including the required terminology from chemical reaction network theory, and provide a statement of the main results.   In Section \ref{sec:tier}, we generalize the ``tier structure'' developed in \cite{AndGAC_ONE2011,AndBounded_ONE2011} and present some preliminary analytical results.   In Section \ref{sec:main}, we provide a general result relating the tiers of a reaction network to the stability of the Markov model. In Section \ref{sec:applications}, we restate  our main results and provide their proofs.  We also introduce some generalizations of our main results. 
  Finally, in Section \ref{sec:discussion}, we provide a brief discussion and directions for future research.  In particular, we discuss how we  believe weak reversibility alone guarantees positive recurrence, and that this conjecture, which is similar to the Global Attractor Conjecture for deterministically modeled systems,  will be an active area of future research.

\section{Mathematical model and a statement of the main results}
\label{sec:mathmodel}

In Section \ref{sec:reactionnetworks}, we formally introduce reaction networks.  In Section \ref{sec:dynamics}, we introduce both the  deterministic  and stochastic models.
In Section \ref{sec:mainresults}, we state our main results.

\subsection{Reaction networks}
\label{sec:reactionnetworks}

A reaction network is a graphical construct that describes the set of possible interactions among the constituent species.  

\begin{defn}\label{def:21}
\textbf{A  reaction network} is given by a triple of finite sets $(\S,\C,\Re)$ where:
\begin{enumerate}
\item \textbf{The species set} $\S=\{S_1,S_2,\cdots,S_d\}$ contains the species of the reaction network.
\item \textbf{The reaction set} $\Re=\{R_1,R_2,\cdots,R_r\}$ consists of ordered pairs $(y,y') \in \Re$ where 
\begin{align}\label{complex}
y=\sum_{i=1}^d y_iS_i \hspace{0.4cm} \textrm{and} \hspace{0.4cm}
y'=\sum_{i=1}^d y'_iS_i
\end{align}
and where the values $y_i,y'_i \in \mathbb{Z}_{\ge 0}$ are the \textbf{stoichiometric coefficients}. We will often write reactions $(y,y')$ as $y\rightarrow y'$.
\item \textbf{The complex set} $\C$ consists of the linear combinations of the species in (\ref{complex}). Specifically, 
$\C = \{y\ |\ y\rightarrow y' \in \Re\} \cup \{y' \ |\ y\rightarrow y' \in \Re\}$. For the reaction $y\to y'$, the complexes $y$ and $y'$ are termed the \textbf{source} and \textbf{product} complex of the reaction, respectively.
\end{enumerate}
\end{defn}
Allowing for a slight abuse of notation, we will let $y$ denote both the linear combination in \eqref{complex} and the vector whose $i$th component is $y_i$,  i.e.~$y=(y_1,y_2,\cdots,y_d)^T \in \mathbb{Z}^d_{\ge 0}$. 
For example, when $\S=\{S_1,S_2,\dots,S_d\}$, $2S_1+S_2$ is associated with $(2,1,0,0,\dots,0) \in \Z^d_{\ge 0}$.

Note that it is perfectly valid to have a linear combination of the form \eqref{complex} with $y_i = 0$ for each $i$.  In this case, we denote the complex by $\emptyset$.

It is most common to present a reaction network with a directed \textit{reaction graph} in which the nodes are the complexes and the directed edges are given by the reactions.
We present an example to solidify notation.

\begin{example}
Consider the reaction network with associated reaction graph
\[
	S+E \rightleftharpoons SE \rightarrow E+P,
\]
which is a usual model   for substrate-enzyme kinetics. For this reaction network, $\S = \{S,E,SE,P\}$, $\C=\{S+E,SE,E+P\}$ and $\Re=\{S+E\rightarrow SE, SE\rightarrow S+E,SE\rightarrow E+P\}$. \hfill $\triangle$
\end{example}

\begin{defn}
Let $(\S,\C,\Re)$ be a reaction network.  The connected components of the associated reaction graph are termed \textbf{linkage classes}.   If a linkage class is strongly connected, then it is called \textbf{weakly reversible}. If all linkage classes in a reaction network are weakly reversible, then the reaction network is said to be weakly reversible. 
\end{defn}

\begin{example}\label{ex11}
Consider the reaction network with associated reaction graph
\begin{align*}
&2A \rightarrow B \rightleftharpoons A+C, \qquad \emptyset \rightarrow 2B, \qquad A+B \rightarrow 2C.\\[-1ex]
&  \hspace{7.1cm}  \displaystyle \nwarrow \hspace{.2in}  \swarrow \\[-1ex]
& \hspace{3.04in} D
\end{align*}
This network has three linkage classes.  The right-most linkage class is weakly reversible, whereas the other two are not.
\hfill $\triangle$
\end{example}

We will denote by $\S(\L),\C(\L)$, and $\Re(\L)$ the sets of  species, complexes, and reactions  involved in linkage class $\mathcal L$, respectively. 

The following definitions related to possible network structures are required to state our main results.

\begin{defn}
A reaction network $(\S,\C,\Re)$ is called \textbf{binary}  if $\sum_{i=1}^d y_i \le 2$ for all $y \in \C$.
\end{defn}

\begin{defn}
Let $(\S,\C,\Re)$ be a reaction network with $\S=\{ S_1,S_2,\cdots,S_d\}$.  
The complex $\emptyset$ is termed the \textbf{zero complex}.  Complexes of the form  $S_i$ are termed  \textbf{unary complexes} and complexes of the form   $S_i+S_j$  are termed \textbf{binary complexes}.  Binary complexes of the form $2S_i$ are termed \textbf{double complexes}.  If $2S_i\in \C$ for each $i=1,2,\dots,d$, then  the reaction network $(\S, \C, \Re)$ is \textbf{double-full}.
\end{defn}

\begin{defn}
We call the reactions $\emptyset \rightarrow S$ and $S \rightarrow \emptyset$  the \textbf{in-flow} and \textbf{out-flow} of $S$, respectively.  We say a reaction network \textbf{has all in-flows and out-flows} if $\emptyset \to S\in \Re$ and $S \to \emptyset \in \Re$ for each $S \in \S$.
\end{defn}

\subsection{Dynamical systems}
\label{sec:dynamics}

In this section, we introduce two dynamical models for reaction networks. We begin with the usual Markov chain model, and then present the deterministic model.

For the usual Markov model the vector  $X(t) \in \Z^d_{\ge 0}$ gives the counts of the constituent species at time $t$, and the transitions are determined by the reactions.  In particular, for appropriate state-dependent intensity (or rate)  functions $\lambda_{y\to y'}:\Z^d_{\ge 0} \to \R_{\ge 0}$ we assume that for each $y\to y'\in \Re$,
\begin{align}
P(X(t+\Delta t) = x+y'-y \ | \ X(t)=x) = \lambda_{y\rightarrow y'}(x)  \Delta t + o(\Delta t) \label{prob}.
\end{align} 
The generator $\mathcal{A}$ of the associated Markov process is \cite{Kurtz86}
\begin{align}\label{gen5}
 \mathcal{A}V(x) = \sum_{y \rightarrow y' \in \Re} \lambda_{y\rightarrow y'}(x)(V(x+y'-y)-V(x)), 
\end{align}
for a function $V:\Z^d_{\ge 0} \to \R$.
The usual choice of intensity is given by \textit{stochastic mass-action kinetics}
\begin{align}\label{mass}
\lambda_{y\rightarrow y'}(x)= \kappa_{y\rightarrow y'} \prod_{i=1}^d \frac{x_i !}{(x_i-y_{i})!}\mathbf{1}_{\{x_i \ge y_{i}\}},
\end{align} 
where the positive constant $\kappa_{y\rightarrow y'}$ is the  reaction rate constant.
We typically incorporate the rate constants into the reaction graphs by placing them next to the reaction arrow as in
 $y\xrightarrow{\kappa} y'$.  Trajectories of this model are typically simulated via the Gillespie algorithm \cite{Gill76,Gill77} or the next reaction method \cite{Anderson2007a,Gibson2000}, or are approximated via tau-leaping \cite{Anderson2007b,Gill2001}.

For  the deterministic model, we let the vector $x(t) \in \R^d_{\ge 0}$ solve 
\begin{align}
\dfrac{d}{dt}x(t) = \sum_{y\rightarrow y'}\kappa_{y\rightarrow y'} x(t)^y(y'-y), 
\end{align}
where for two vectors $u,v\in \R^d_{\ge 0}$, we define $u^v = \prod_{i=1}^d u_i^{v_i}$, with the convention $0^0 = 1$. The vector $x(t)$ then models  the concentrations of the constituent species at time $t$. See \cite{AndKurtz2011, AK2015,KurtzPop81} for the connection between the stochastic and deterministic models.  The choice of rate function, i.e.~$\kappa_{y\to y'} x(t)^y$, is termed \textit{deterministic mass-action kinetics.}

Recalling the discussion below Definition \ref{def:21}, we will sometimes write $x^{\sum_{i=1}^d y_iS_i}$ for $\prod_{i=1}^d x_i^{y_i}$.   For example, we have $x^{2S_1 + S_2} = x_1^2 x_2$.

\subsection{Statement of main results}
\label{sec:mainresults}

In this section we state our main results.  The first, Theorem \ref{thm:main1}, is similar to the main results found in \cite{AndGAC_ONE2011,AndBounded_ONE2011} related to deterministic systems that are weakly reversible and have a single linkage class.

\begin{thm}\label{thm:main1}
Let $(\S,\C, \Re)$ be a weakly reversible, binary reaction network that has a single linkage class.   Let $\widetilde \Re =  \Re \cup_{S\in \S} \{\emptyset \to S, S \to \emptyset\}$ and $\widetilde{\C} = \C \cup\{\emptyset\}\cup\{S \ | \ S \in \S\}$. Then, for any choice of rate constants, every state of the Markov process with intensity functions \eqref{mass} associated to the reaction network $(\S,\widetilde{\C},\widetilde \Re)$ is positive recurrent.

%
\end{thm}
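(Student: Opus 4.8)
The plan is to construct a Lyapunov function $V:\Z^d_{\ge 0}\to\R_{\ge 0}$ for the generator $\mathcal{A}$ of the Markov process associated to $(\S,\widetilde\C,\widetilde\Re)$ and to verify the standard Foster--Lyapunov / Meyn--Tweedie criterion: there exist constants $c>0$ and $K<\infty$ such that $\mathcal{A}V(x)\le -c$ whenever $|x|>K$, together with the fact that the process is irreducible on $\Z^d_{\ge 0}$ (which follows because the added in-flows $\emptyset\to S$ and out-flows $S\to\emptyset$ make every state reachable from every other). Irreducibility plus a negative drift outside a finite set then gives positive recurrence of the single closed communicating class, which here is all of $\Z^d_{\ge 0}$. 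The natural candidate, following \cite{AndGAC_ONE2011,AndBounded_ONE2011} and the ``tier'' machinery promised in Section \ref{sec:tier}, is a function built from the deterministic-model Lyapunov function, e.g.\ a sum over species of terms like $x_i(\log x_i - 1)$ (a stochastic analogue of the pseudo-Helmholtz free energy), or more precisely the scaled/log-scaled object that the tier analysis of Section \ref{sec:main} is designed to control.

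The key steps, in order, are as follows. First, reduce positive recurrence to the drift condition $\mathcal{A}V(x)\le -c$ for $|x|$ large, citing the Foster--Lyapunov theorem and noting irreducibility from the flow reactions. Second, expand $\mathcal{A}V(x)=\sum_{y\to y'\in\widetilde\Re}\lambda_{y\to y'}(x)\,(V(x+y'-y)-V(x))$ and, using that $V$ is smooth-like in the relevant sense, approximate each increment $V(x+y'-y)-V(x)$ by $\nabla V(x)\cdot(y'-y)$ plus a lower-order error; for the logarithmic choice of $V$ and mass-action rates $\lambda_{y\to y'}(x)\approx \kappa_{y\to y'} x^y$, each term behaves like $\kappa_{y\to y'} x^y\,(\log x)\cdot(y'-y)$ up to controlled corrections. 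Third, invoke the tier structure: along any sequence $x^{(n)}$ with $|x^{(n)}|\to\infty$, pass to a subsequence so that the reactions sort into tiers according to the growth rate of $\lambda_{y\to y'}(x^{(n)})$; the dominant (top-tier) reactions are the ones that determine the sign of the leading term of $\mathcal{A}V(x^{(n)})$. Fourth, use weak reversibility together with the single-linkage-class hypothesis (and binarity, which bounds the complexes and hence the degrees $x^y$) to show that the top-tier reactions cannot all simultaneously increase $V$: weak reversibility guarantees that for the source complex of maximal ``$V$-weighted'' size there is a reaction leaving it, and the linkage-class / conservation structure forces a strictly negative contribution, exactly as in the deterministic global-stability arguments. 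Fifth, handle the newly added in-flow and out-flow reactions: the out-flows $S\to\emptyset$ contribute negatively to the drift (they decrease $x_i$, hence decrease $x_i\log x_i$), reinforcing stability, while the in-flows $\emptyset\to S$ have bounded rate $\kappa_{\emptyset\to S}$ and contribute only an $O(\log|x|)$ term, dominated by the top-tier negative term; this is also what guarantees the argument works even when $(\S,\C,\Re)$ alone is not conservative.

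I expect the main obstacle to be Step four: showing that the leading-order term of $\mathcal{A}V$ along an arbitrary diverging sequence is strictly negative, uniformly. Concretely, one must rule out the possibility that the top tier consists only of reactions whose net effect on $V$ is zero or positive; this is precisely where the tier partition from Section \ref{sec:tier} and the general drift result of Section \ref{sec:main} do the heavy lifting, and where weak reversibility plus a single linkage class are essential — they ensure that the complex maximizing the relevant growth exponent is the source of some reaction, and that following reactions ``downhill'' in tier order eventually produces a strict decrease. A secondary technical point is making the error terms in the increment expansion (from the difference between $\lambda_{y\to y'}(x)$ and $\kappa_{y\to y'}x^y$, and between $V(x+\xi)-V(x)$ and $\nabla V(x)\cdot\xi$) genuinely lower-order than the leading tier term; binarity keeps all exponents $\le 2$, so these corrections are powers of $x$ strictly smaller than the dominant monomial times $\log|x|$, and can be absorbed. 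Once the strict negativity of the leading term is in hand on every diverging subsequence, a compactness argument yields the uniform bound $\mathcal{A}V(x)\le -c$ outside a finite set, completing the proof.
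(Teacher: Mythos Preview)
Your overall strategy---Foster--Lyapunov with the entropy function $V(x)=\sum_i\bigl(x_i\ln x_i-x_i+1\bigr)$, tier analysis along diverging sequences, and irreducibility from the flows---matches the paper's approach for the bulk of the argument. However, there is a genuine gap that the paper handles by a separate case, and which your Step~4 cannot cover.

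The problem is the situation in which the original complex set $\C$ consists entirely of binary complexes (the all-unary case is similar). Consider the single-linkage-class network $2S_1\rightleftharpoons S_1+S_2$ together with all flows, and the tier-sequence $x_n=(n,2n)$. Both complexes of $\C$ lie in the top D-tier $\Td1$; the unary sources of the out-flows lie strictly below. Hence there is \emph{no} descending reaction at all, and your Step~4 premise (``the complex of maximal $V$-weighted size is the source of a reaction going downhill'') fails. Worse, the entropy Lyapunov function itself breaks: a direct computation gives $\mathcal{A}V(x_n)\approx(\kappa_1-2\kappa_2)\,n^2\ln 2-O(n\ln n)$, which tends to $+\infty$ whenever $\kappa_1>2\kappa_2$. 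So the out-flows are not merely ``reinforcing'' an already-negative top-tier term; in this regime the top tier dominates with the wrong sign and no choice of constants in your error bounds will absorb it.

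The paper's remedy is to split off the case where all complexes of $\C$ have the same total degree and use the linear function $W(x)=\sum_i x_i$ instead: then $\vec{1}\cdot(y'-y)=0$ for every $y\to y'\in\Re$, so $\mathcal{A}W(x)=-\sum_i\kappa_{S_i\to\emptyset}x_i+\sum_i\kappa_{\emptyset\to S_i}\to-\infty$. Only in the remaining mixed case does the tier argument with $V$ go through. There is also a second subtlety your outline glosses over: even when a descending reaction with binary source $y_0=S_i+S_j\in\Td1$ exists, one may have $\lambda_{y_0}(x_n)=0$ (e.g.\ $x_{n,j}\equiv 0$), so $y_0\in T^{S,\infty}_{\{x_n\}}$ and it contributes nothing to $\mathcal{A}V$. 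The paper then observes that $(x_n\vee1)^{S_i}=(x_n\vee1)^{S_i+S_j}$, so $S_i\in\Td1$, and it is the out-flow $S_i\to\emptyset$ that furnishes the descending reaction with nonzero rate. In both places the flows are structurally essential to the argument, not an afterthought.
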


Our second main result is related to models that are double-full.

\begin{thm}\label{thm52}
Let $(\S,\C, \Re)$ be a binary reaction network satisfying the following two conditions:
\begin{enumerate}
\item the reaction network is double-full, and
\item for each double complex (of the form $2S_i$) there is a directed path within the reaction graph beginning with the double complex itself and ending with either a unary complex (of the form $S_j$) or  the zero complex.
\end{enumerate} 
Then, for any choice of rate constants,  the Markov process with intensity functions \eqref{mass} associated to the reaction network $(\S,\C,\Re)$ satisfies the following: each state in a closed, irreducible component of the state space is positive recurrent; moreover, if  $\tau_{x_0}$ is the time for the process to enter the union of the closed irreducible components given an initial condition  $x_0$, then $\mathbb{E}[\tau_{x_0}] < \infty$.
\end{thm}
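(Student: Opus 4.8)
The plan is to verify a single Foster--Lyapunov drift inequality and hand it to the standard theory. Concretely, I will exhibit a norm-like function $V\colon\Z^d_{\ge 0}\to\R_{\ge 0}$ (so $V(x)\to\infty$ as $|x|\to\infty$) and a finite set $K$ with $\A V(x)\le -1$ for every $x\notin K$. Everything the theorem asserts then follows routinely: $\A V$ is bounded above, so the chain is non-explosive; the supermartingale $V(X(t\wedge\tau_K))+(t\wedge\tau_K)$ gives $\mathbb E_{x_0}[\tau_K]\le V(x_0)<\infty$; restricting the chain to a closed irreducible component $\Gamma$ (so that $\Gamma\cap K$ is finite) and applying Foster's criterion shows $\Gamma$ is positive recurrent; and, since $K$ is finite and the drift bound also forces the chain to reach the union of the closed irreducible components almost surely, $\mathbb E_{x_0}[\tau_{x_0}]<\infty$. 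For $V$ I will take a free-energy-type function, e.g.
\[
 V(x)=\sum_{i=1}^{d}\bigl(x_i\log x_i-x_i+1\bigr),
\]
for which a short expansion gives $V(x+y'-y)-V(x)=\log x^{y'}-\log x^{y}+O(1)$ as soon as the coordinates of $x$ in the supports of $y,y'$ are large (smaller or vanishing coordinates worsen this only by $O(1)$), and hence, using $\lambda_{y\to y'}(x)=\kappa_{y\to y'}x^{y}(1+o(1))$ for such $x$,
\[
 \A V(x)=\sum_{y\to y'\in\Re}\kappa_{y\to y'}\,x^{y}\bigl(\log x^{y'}-\log x^{y}\bigr)+O\Bigl(\textstyle\sum_{y\to y'}x^{y}\Bigr).
\]
Thus it suffices to prove $\A V(x_n)\to-\infty$ along every sequence $x_n\to\infty$ in $\Z^d_{\ge 0}$; this is precisely the output the tier criterion of Sections~\ref{sec:tier}--\ref{sec:main} is meant to produce once its network hypothesis is checked, so the real task is to verify that hypothesis for double-full networks obeying condition~(2).

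Fix such a sequence. After passing to a subsequence, set $I=\{i:x_{n,i}\to\infty\}$, which is nonempty since $x_n\to\infty$, and pick $i^\ast\in I$ with $x_{n,i^\ast}=\max_i x_{n,i}$. Since the network is binary, every monomial $x_n^{y}$ with $y\in\C$ equals $1$, $x_{n,j}$, $x_{n,j}x_{n,\ell}$, or $x_{n,j}^{2}$, hence is at most $x_{n,i^\ast}^{2}$; and by the double-full hypothesis $2S_{i^\ast}\in\C$ with $x_n^{2S_{i^\ast}}=x_{n,i^\ast}^{2}$, so $M_n\coloneqq\max_{y\in\C}x_n^{y}=x_{n,i^\ast}^{2}$ is attained at the double complex $2S_{i^\ast}$; call a complex \emph{top} when its monomial is $\asymp M_n$. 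Applying condition~(2) to $2S_{i^\ast}$ yields a directed path
\[
 2S_{i^\ast}=z_0\to z_1\to\cdots\to z_m
\]
whose last complex $z_m$ is unary or zero, so $x_n^{z_m}\le x_{n,i^\ast}=o(M_n)$ and $z_m$ is not top while $z_0$ is; in particular $m\ge 1$, so the chain contains genuine reactions. Let $z_k\to z_{k+1}$ be the first edge along which the monomial leaves the top class: $z_k$ is top, so $x_n^{z_k}\asymp M_n$ (and every species in the support of $z_k$ then lies in $I$), whereas $x_n^{z_{k+1}}/x_n^{z_k}\to0$.

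This escape reaction drives the drift to $-\infty$. Its contribution to $\A V(x_n)$ is $\kappa_{z_k\to z_{k+1}}x_n^{z_k}\bigl(\log x_n^{z_{k+1}}-\log x_n^{z_k}+O(1)\bigr)$, and since the parenthesis $\to-\infty$ while $x_n^{z_k}\asymp M_n$, this is $\le -c\,M_n g_n$ for some constant $c>0$ and some $g_n\to\infty$. Meanwhile, by the elementary inequality $t\log(M_n/t)\le M_n/e$ for $0<t\le M_n$, the positive part of the contribution of any reaction $y\to y'$ is bounded by $\kappa_{y\to y'}x_n^{y}\log(x_n^{y'}/x_n^{y})+O(x_n^{y})\le(\mathrm{const})M_n$ (using $x_n^{y'}\le M_n$), so all positive contributions together are $O(M_n)$, while every remaining contribution is $\le 0$. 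Hence $\A V(x_n)\le -c\,M_n g_n+O(M_n)\to-\infty$, giving $\A V\le-1$ off a finite set. The main difficulty I anticipate is not in this combinatorial step but in the foundational estimate on which the tier criterion rests --- establishing the expansion of $\A V$ displayed above with all error terms controlled uniformly, and, in the general setting of Section~\ref{sec:main}, disentangling the several tiers (rather than the single top class needed here) together with vanishing intensities and bounded coordinates. Granting that machinery, the only reaction-network input needed for Theorem~\ref{thm52} is the short observation above: double-fullness forces a double complex to attain the top monomial, and condition~(2) equips that complex with a directed escape route to a strictly lower tier.
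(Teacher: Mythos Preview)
Your proposal is correct and follows the same route as the paper: choose the entropy Lyapunov function, reduce via the tier machinery of Sections~\ref{sec:tier}--\ref{sec:main} to exhibiting a descending reaction out of the top tier along every tier-sequence, and observe that double-fullness places some $2S_{i^\ast}$ at the top while condition~(2) supplies a directed path from it to a non-top complex (the paper packages this via Lemma~\ref{lem51} and then invokes Corollary~\ref{cor33}). Your inline bound on the positive drift terms using $t\log(M_n/t)\le M_n/e$ is a pleasant shortcut relative to the $I$--$IV$ decomposition in the proof of Theorem~\ref{thm32}, but the overall architecture is identical.
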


\section{Mathematical preliminaries: Lyapunov functions and tiers}\label{sec:tier}

In this section we introduce the well known Foster-Lyapunov conditions.  We also generalize the ``tier structure'' that was introduced in \cite{AndGAC_ONE2011,AndBounded_ONE2011} for continuous, deterministic models to the discrete, stochastic setting.  In Section \ref{sec:main}, we will use the new ideas related to tiers to ensure  the Foster-Lyapunov conditions hold for our models of interest.

 The theorem below relating Lyapunov functions with positive recurrence of a Markov model is well known.  See \cite{MT-LyaFosterIII} for more on this topic.


\begin{thm}\label{thm11}Let $X$ be a continuous-time Markov chain on a countable state space $\mathbb{S}$ with generator $\mathcal A$. Suppose there exists a finite set $K \subset \mathbb{S}$ and a positive function $V$ on $\mathbb{S}$ such that 
\begin{eqnarray}\label{lya}
\mathcal{A}V(x) \le -1
\end{eqnarray}
for all $x \in \mathbb{S}\setminus K$. Then each state in a closed, irreducible component of $\mathbb{S}$ is positive recurrent.  Moreover, if $\tau_{x_0}$ is the time for the process to enter the union of the closed irreducible components given an initial condition  $x_0$, then $\mathbb{E}[\tau_{x_0}] < \infty$.
\end{thm}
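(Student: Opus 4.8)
The plan is to run the classical Foster--Lyapunov argument: first convert the drift inequality \eqref{lya} into a bound on the expected hitting time of $K$, then exploit the finiteness of $K$ to locate the recurrent structure of $X$ and read off both assertions. Write $\tau_K := \inf\{t\ge 0 : X(t)\in K\}$ (so $\tau_K=0$ if $X(0)\in K$), and, as is standard in this setting, take $X$ to be non-explosive. \emph{Step 1 (hitting time of $K$).} I would show $\mathbb E_x[\tau_K]\le V(x)$ for every $x\in\mathbb S$. Since neither $\mathcal AV$ nor the path of $X$ is controlled a priori, I localize: with $B_n\uparrow\mathbb S$ finite, $T_n$ the $n$-th jump time of $X$, and $\sigma_n := \tau_K\wedge T_n\wedge\inf\{t : X(t)\notin B_n\}$, the process visits only finitely many states on $[0,\sigma_n]$, so Dynkin's formula together with \eqref{lya} and $\sigma_n\le\tau_K$ gives $\mathbb E_x[V(X(\sigma_n))]-V(x)\le -\mathbb E_x[\sigma_n]$. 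As $V>0$ this yields $\mathbb E_x[\sigma_n]\le V(x)$ for all $n$, and letting $n\to\infty$ ($\sigma_n\uparrow\tau_K$ by non-explosivity, with monotone convergence) gives $\mathbb E_x[\tau_K]\le V(x)<\infty$. Hence $X$ reaches $K$ a.s.\ from every state; conditioning on the first jump and using the strong Markov property with this bound then shows $L:=\max_{z\in K}\mathbb E_z[\rho_K]<\infty$, where $\rho_K$ is the first return time to $K$ after time $T_1$ (here finiteness of $K$ is essential).

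\emph{Step 2 (positive recurrence on closed components).} Let $C$ be a closed irreducible component and $x\in C$. Since $C$ is closed the trajectory started at $x$ stays in $C$, and since it hits $K$ it must hit $K\cap C$; hence $K\cap C\neq\emptyset$. The chain restricted to $C$ has generator agreeing with $\mathcal A$ on $C$, is irreducible, and satisfies $\mathcal AV\le -1$ off the finite nonempty set $K\cap C$. I then invoke the standard fact that an irreducible chain obeying such a drift condition is positive recurrent: fixing $z\in K\cap C$, the return time to $z$ is dominated by a geometric number of successive excursions through the finite irreducible set $K\cap C$, each of uniformly bounded expected length by Step 1 applied within $C$, so $\mathbb E_z[\text{return time to }z]<\infty$. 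Thus every state of $C$ is positive recurrent.

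\emph{Step 3 (finite expected time to the recurrent part).} Observe $X$ at its successive visit times $0\le\theta_0<\theta_1<\cdots$ to $K$; this defines a discrete-time Markov chain $(Y_k)$ on the finite set $K$, whose recurrent states form communicating classes $D_1,\dots,D_m\subseteq K$, with $M:=\max_{y\in K}\mathbb E[\,\text{hitting time of }\cup_i D_i\mid Y_0=y]<\infty$. Each step of $(Y_k)$ corresponds to one excursion of $X$ of conditional expected length at most $L$, so an optional-stopping (Wald-type) estimate bounds by $\mathbb E_{x_0}[\tau_K]+LM<\infty$ the real time at which $(Y_k)$ first reaches $\cup_i D_i$. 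Finally, once $(Y_k)$ enters some $D_i$ it never leaves it — an exit would be recorded as a visit of $X$ to $K\setminus D_i$ — so from that instant $X$ is confined to the closed communicating class $C_i\supseteq D_i$, which is precisely a closed irreducible component; hence $\tau_{x_0}$ is at most that real time, giving $\mathbb E[\tau_{x_0}]<\infty$.

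I expect the main obstacle to be the bookkeeping common to Steps 1 and 3 that upgrades ``$X$ hits the finite set $K$ in finite expected time'' to genuine control of the global recurrence structure: namely the localization/non-explosion argument making the passage to the limit in Dynkin's formula legitimate, and the optional-stopping argument on the $K$-embedded chain $(Y_k)$ showing that the union of the closed irreducible components is entered not merely almost surely but in finite expectation, controlled by $V(x_0)$.
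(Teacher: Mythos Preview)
The paper does not prove Theorem~\ref{thm11}: it is stated as a well-known Foster--Lyapunov criterion, with a pointer to \cite{MT-LyaFosterIII}, and then used as a black box throughout the rest of the paper. So there is no paper-proof to compare your proposal against.

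That said, your sketch is the standard argument and is essentially sound. A few places deserve tightening if you want it to stand on its own. First, you assume non-explosion rather than deriving it; in fact the same localization in Step~1 already yields it, since $\mathbb E_x[V(X(\sigma_n))]\le V(x)$ keeps the stopped process from running off to infinity in finite time, so you could fold this in. Second, the claim $L=\max_{z\in K}\mathbb E_z[\rho_K]<\infty$ quietly requires that $\sum_w p(z,w)V(w)<\infty$ for each $z\in K$; this is automatic in the reaction-network setting (finitely many reaction vectors, hence finitely many jump targets from any state) and is implicit in the usual regularity hypotheses on $\mathcal A$, but is worth flagging. Third, in Step~3 the passage from ``$D_i$ is a recurrent class of the embedded chain $(Y_k)$'' to ``$X$ is thereafter confined to a closed irreducible component $C_i$'' is correct but compressed: recurrence of $z\in D_i$ for $(Y_k)$ gives recurrence of $z$ for $X$, and then the $X$-reachable set from $z$ is the desired closed irreducible component. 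None of these points is a genuine gap in the argument.
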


%
%
%

In this paper, our main Lyapunov function will be 
$V(x) = \sum_{i=1}^d v(x_i)$ where
\begin{align}\label{eq:MainLyapunov}
v(x)=
\begin{cases}
 x(\ln{x}-1)+1,  &\textrm{if} \ x \in \Z_{\ge 0}\\
 1, & \textrm{otherwise},
 \end{cases}
\end{align}  with the convention $0 \ln{0}=0$. This function has been used widely  to verify the stability of deterministic models of  reaction networks. In particular, it  played a significant role in the proof of the Deficiency Zero Theorem of chemical reaction network theory \cite{Feinberg72, Feinberg87,FeinbergLec79,HornJack72}.

We turn to the task of generalizing the tier structures introduced in \cite{AndGAC_ONE2011,AndBounded_ONE2011} to the discrete state space setting.  The key difference between the present setting and that of \cite{AndGAC_ONE2011,AndBounded_ONE2011} is that now the process can  hit the boundary of the state space, where intensity functions can take the value of zero.

%

We begin by  introducing some useful terminology.
\begin{enumerate}
\item  For $x\in \Z^d$, we write $(x \vee 1)$ for the vector in $\Z^d$ with $j$th component
$(x\vee 1)_j = x_j\vee 1 = \max\{x_j,1\}$. 
\item We will use the the phrase ``for large $n$" for ``for all $n$ greater than some fixed constant $N$".
\item For each complex $y \in \C$, we define the following  function, 
\begin{align*}
\lambda_y(x) := \prod_{i=1}^d \frac{x_i !}{(x_i-y_{i})!}\mathbf{1}_{\{x_i \ge y_{i}\}},\quad \text{ for } x \in \Z^d.
\end{align*}
Note that for the reaction $y\to y'\in \Re$,  we have $\lambda_{y\rightarrow y'}(x) = \kappa_{y\rightarrow y'}\lambda_y(x)$.  That is, $\lambda_y(x)$ is the portion of the stochastic mass-action term that  depends upon the source complex $y$.
\end{enumerate}
 
 We now define two types of tiers: D-type and S-type.  The D-type and S-type  tiers will allow us to later quantify the relative sizes of the different terms in the expression for $\mathcal AV$, where $V$ is defined in and around \eqref{eq:MainLyapunov}, required to utilized Theorem \ref{thm11}.  In particular, you can see in equation \eqref{asymptotic of AV} the precise functional role each term plays in $\mathcal A V$.   
\begin{defn}\label{def31}
Let $( \S,\C,\Re)$ be a  reaction network and let $\{x_n\}$ be a sequence in $\R^d_{\ge 0}$.  We say that $\C$ has a \textbf{D-type partition along  $\{x_n\}$} if there exist a finite number of nonempty mutually disjoint subsets $T^{D,i}_{\{x_n\}} \subset \C$ such that $\cup_{i} T^{D,i}_{\{x_n\}}  = \C$, and
\begin{enumerate}
\item if $y,y' \in T^{D,i}_{\{x_n\}}$, then there exists a $C \in (0,\infty)$ such that
\begin{align*}
\lim_{n\rightarrow \infty} \frac{(x_n \vee 1)^{y}}{(x_n\vee 1)^{y'}} = C, 
\end{align*}
\item if $y \in T^{D,i}_{\{x_n\}}$ and $y' \in T^{D,k}_{\{x_n\}}$ with $i < k$ then
\begin{align*}
\lim_{n\rightarrow \infty} \dfrac{(x_n \vee 1)^{y'}}{(x_n\vee 1)^{y}} = 0.
\end{align*}
\end{enumerate}
The mutually disjoint subsets $T^{D,i}_{\{x_n\}}$ are called \textbf{D-type tiers along $\{x_n\}$}.  We will say that $y$ is in a higher tier than $y'$ in the D-type partition along $\{x_n\}$ if $y \in T^{D,i}_{\{x_n\}}$ and $y' \in T^{D,j}_{\{x_n\}}$ with $i < j$. In this case, we will denote $y \succ_D y'$. If $y$ and $y'$ are in the same D-type tier, then we will denote this by $y\sim_D y'$. 
\end{defn} 

Note that $\C$ is a well-ordered set with $\succ_D$ and $\sim_D$.

The terminology of saying tier $\Td1$ is \textit{higher} than the other tiers comes from point 2 ~in Definition \ref{def31}. 

\begin{defn}\label{def32}
Let $( \S,\C,\Re)$ be a reaction network and let $\{x_n\}$ be a sequence in $\R^d_{\ge 0}$.   We say that $\C$ has a \textbf{S-type partition along  $\{x_n\}$} if there exist a finite number of nonempty mutually disjoint subsets $T^{S,i}_{\{x_n\}} \subset \C$,  with  $i\in \{1,\dots,P,\infty\}$, such that $T^{S,1}_{\{x_n\}}\cup \cdots \cup T^{S,P}_{\{x_n\}}\cup T^{S,\infty}_{\{x_n\}}  = \C$, and
\begin{enumerate}
\item $y \in T^{S,\infty}_{\{x_n\}}$ if and only if $\lambda_{y}(x_n) = 0$ for all $n$, 

\item $\lambda_y(x_n) \ne 0$ for any $n$ if $y \in T_{\{x_n\}}^{S,i}$ for $i \in \{1,\dots, P\},$

\item if $y,y' \in T^{S,i}_{\{x_n\}}$, with $i \in \{1,\dots,P\}$, then there exists a $C \in (0,\infty)$ such that
\begin{align*}
\lim_{n\rightarrow \infty} \dfrac{\lambda_{y}(x_n)}{\lambda_{y'}(x_n)} = C, 
\end{align*}
\item if $y \in T^{S,i}_{\{x_n\}}$ and $y' \in T^{S,k}_{\{x_n\}}$ with $1\le i < k\le P$, then
\begin{align*}
\lim_{n\rightarrow \infty} \dfrac{\lambda_{y'}(x_n)}{\lambda_{y}(x_n)} = 0.
\end{align*}
\end{enumerate}
The mutually disjoint subsets $T^{S,i}_{\{x_n\}}$ are called \textbf{S-type tiers along $\{x_n\}$}.  We will say that $y$ is in a higher tier than $y'$ in the S-type partition along $\{x_n\}$ if $y \in T^{S,i}_{\{x_n\}}$ and $y' \in T^{S,j}_{\{x_n\}}$ with $i < j$. 
\end{defn}

We present an example in order to clarify  the previous two definitions.

\begin{example} \label{ex: idea of tier}
Consider the reaction network
\[
A + B \rightleftharpoons C \rightleftharpoons \emptyset
\]
and let $x_n = (n^2,0,n)$.  For this sequence and this reaction network we have
\[
(x_n \vee 1)^{A+B} = n^2, \quad (x_n\vee 1)^C = n, \quad \text{and} \quad (x_n \vee 1)^\emptyset = 1,
\]
and so
\[
\Td1=\{A+B\}, \quad T^{D,2}_{\{x_n\}} = \{C\}, \quad T^{D,3}_{\{x_n\}} = \{\emptyset\}.
\]
However,
\[
\lambda_{A+B}(x_n) = 0, \quad \lambda_C(x_n) = n, \quad \text{and}\quad \lambda_\emptyset(x_n) = 1,
\]
so that
\[
T^{S,\infty}_{\{x_n\}}=\{A+B\}, \quad T^{S,1}_{\{x_n\}} = \{C\}, \quad T^{S,2}_{\{x_n\}} = \{\emptyset\}.
\]
\hfill $\triangle$
\end{example}

These two tier structures make hierarchies for the complexes  with respect to the sizes of $(x_n \vee 1)^y$ and $\lambda_y(x_n)$ along a sequence $\{x_n\}$.

\begin{defn}
Let $(\S,\C,\Re)$ be a reaction network and suppose that $\{T^{D,i}_{\{x_n\}} \}$ are D-type tiers along a sequence $\{x_n\}$.  We will call $y \rightarrow y'\in \Re$  a \textbf{descending reaction} along $\{x_n\}$ if $y \in T^{D,1}_{\{x_n\}}$ and $y'\in T^{D,i}_{\{x_n\}}$ for some $i > 1$. We denote
\begin{align*}
D_{\{x_n\}} := \{ y \in \C\ | \ y\rightarrow y' \ \textrm{is a descending reaction along $\{x_n\}$} \}.
\end{align*}
\end{defn}

For example, note that for the reaction network and sequence of Example \ref{ex: idea of tier}, the set of descending reactions along $\{x_n\}$ is $\{A + B \to C\}$ and so $D_{\{x_n\}} = \{A + B\}$.

\begin{defn}\label{def_tier-seqence}
For a network $(\S,\C,\Re)$ with $|\S|=d$, a sequence $\{x_n\} \subset \Z^d_{\ge 0}$ is called a \textbf{tier-sequence} if \begin{enumerate}
\item $\lim_{n\rightarrow\infty}x_{n,i} \in [0,\infty]$ for all $i=1,2,\cdots,d$ and  $\lim_{n\rightarrow \infty}x_{n,i} = \infty$ for at least one $i$, and
\item $\C$ has both a D-type partition and an S-type partition along $\{x_n\}$. 
\end{enumerate}
\end{defn}

Note that $\{x_n\}$ given in Example \ref{ex: idea of tier} is a tier-sequence for the given  reaction network.

\begin{lem}\label{lem21} Let $( \S,\C,\Re)$ be a reaction network with $|\S|=d$ and let $\{x_n\} \subset \mathbb{Z}^d_{\ge0}$ be an arbitrary sequence such that $\lim_{n\rightarrow \infty}|x_n|=\infty$. Then there exists a subsequence of $\{x_n\}$ which is tier-sequence.  
\end{lem}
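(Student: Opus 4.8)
The plan is to produce the desired subsequence by a finite chain of successive subsequence extractions, each exploiting the sequential compactness of the extended half-line $[0,\infty]$, and then to \emph{define} the two partitions directly from the resulting limits; each time we pass to a subsequence we retain the notation $\{x_n\}$. I would first force every coordinate to converge: for each $i=1,\dots,d$ in turn the integer sequence $\{x_{n,i}\}$ has a subsequence converging in $[0,\infty]$, and iterating over $i$ yields a subsequence along which $\lmt x_{n,i}$ exists in $[0,\infty]$ for all $i$. Since $\lmt|x_n|=\infty$, at least one of these limits is $\infty$, so the first requirement of Definition~\ref{def_tier-seqence} holds. Moreover, whenever $\lmt x_{n,i}$ is finite the sequence $\{x_{n,i}\}$, being integer valued, is eventually constant; after discarding finitely many initial terms I may assume that $x_{n,i}$ equals this limit for \emph{all} $n$, and also that $x_{n,i}\ge\max_{y\in\C}y_i$ for all $n$ whenever $\lmt x_{n,i}=\infty$. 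Then, for each complex $y$, the indicators $\mathbf 1_{\{x_{n,i}\ge y_i\}}$ are constant in $n$, so $\lambda_y(x_n)$ is either $0$ for every $n$ or nonzero for every $n$; I set $\Tsinf:=\{y\in\C:\lambda_y(x_n)=0\text{ for all }n\}$.

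Next I would make all the relevant comparisons converge and read off the tiers. For each ordered pair $(y,y')\in\C^2$ the sequence $\big((x_n\vee 1)^{y}/(x_n\vee 1)^{y'}\big)_n$ takes values in $(0,\infty)$ — each $(x_n\vee 1)_j$ is a finite integer $\ge 1$ — hence has a subsequence converging in $[0,\infty]$; similarly, for each ordered pair $(y,y')$ with $y,y'\notin\Tsinf$ the sequence $\big(\lambda_y(x_n)/\lambda_{y'}(x_n)\big)_n$ lies in $(0,\infty)$ and has a convergent subsequence in $[0,\infty]$. As $\C$ is finite there are only finitely many such pairs, so after finitely many further extractions every one of these limits exists in $[0,\infty]$. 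I then set $y\sim_D y'$ when $\lmt (x_n\vee 1)^{y}/(x_n\vee 1)^{y'}\in(0,\infty)$ and $y\succ_D y'$ when this limit is $\infty$; since $(x_n\vee 1)^{y}/(x_n\vee 1)^{y''}$ factors as the product of $(x_n\vee 1)^{y}/(x_n\vee 1)^{y'}$ and $(x_n\vee 1)^{y'}/(x_n\vee 1)^{y''}$, and the products of limits arising in the transitivity and compatibility checks are never of the indeterminate form $0\cdot\infty$ (they are always of the form $C\cdot C'$, $C\cdot\infty$ or $\infty\cdot\infty$ with $C,C'\in(0,\infty)$), $\sim_D$ is an equivalence relation and $\succ_D$ linearly orders its finitely many nonempty classes; listing the classes from largest to smallest gives tiers $\Tdi$ meeting Definition~\ref{def31}. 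The S-type partition is constructed the same way from the complexes outside $\Tsinf$ and the ratios $\lambda_y(x_n)/\lambda_{y'}(x_n)$, producing tiers $\Tsi$ ($1\le i\le P$) which, together with $\Tsinf$, satisfy all four conditions of Definition~\ref{def32} — condition~(1) there holding precisely because of the reindexing carried out in the previous paragraph. Hence the final subsequence is a tier-sequence.

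The only delicate points are the bookkeeping of the finitely many extractions and, inside it, the elementary remark that an integer-valued coordinate with a finite limit is eventually constant. It is this remark that upgrades ``$\lambda_y(x_n)\ne 0$ for large $n$'' to ``$\lambda_y(x_n)\ne 0$ for all $n$,'' as demanded by Definition~\ref{def32}(1)--(2), and it is essentially the one place where the present discrete setting genuinely departs from the continuous one treated in \cite{AndGAC_ONE2011,AndBounded_ONE2011}. Checking that the limiting ratios make $\C$ a total preorder reduces to verifying that no chained product of such ratios has the form $0\cdot\infty$, which is automatic once every ratio has been made to converge, so I expect that step to be routine rather than an obstacle.
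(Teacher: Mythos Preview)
Your proof is correct and follows the same repeated–subsequence–extraction approach the paper sketches (and attributes to \cite{AndGAC_ONE2011}); in fact you supply the details the paper omits, correctly singling out the one genuinely new step in the discrete setting—that an integer-valued coordinate with a finite limit is eventually constant, which is precisely what is needed to upgrade ``for large $n$'' to ``for all $n$'' in Definition~\ref{def32}(1)--(2).
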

The proof of \lemref{lem21} is similar to that of Lemma 4.2 in \cite{AndGAC_ONE2011}.  In particular, the relevant tiers can each be constructed sequentially by repeatedly taking  subsequences. The details are omitted for the sake of brevity.    

%

\begin{lem}\label{lem : Td1 goes infinity}
 Let $\{x_n\}$  be a tier-sequence of a reaction network $(\S,\C,\Re)$. If $y_0\in \Td1$, then $\lmt (x_n\vee 1)^{y_0}=\infty$.
\end{lem}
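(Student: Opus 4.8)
The plan is to argue by contradiction together with the defining property of the highest D-type tier. Suppose $y_0 \in \Td1$ but $\lmt (x_n \vee 1)^{y_0} \ne \infty$. First I would observe that, since $\{x_n\}$ is a tier-sequence, part 1 of Definition \ref{def_tier-seqence} guarantees that $x_{n,i} \to \infty$ for at least one index $i$; fix such an $i$. Then the unary complex $S_i$ (or, more carefully, whichever complex the structure actually provides — see below) has $(x_n \vee 1)^{S_i} = x_{n,i} \vee 1 \to \infty$. The idea is to compare $y_0$, which sits in the top tier, against a complex whose monomial is known to blow up, and derive a contradiction with part 2 of Definition \ref{def31}: anything in $\Td1$ must dominate (or tie) every other complex, so its monomial cannot be bounded while another complex's monomial tends to infinity.

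The key steps, in order, are: (i) extract from $\{x_n\}$ being a tier-sequence the index $i$ with $x_{n,i} \to \infty$; (ii) identify a complex $z \in \C$ whose monomial $(x_n \vee 1)^z$ tends to infinity — the natural candidate is some complex actually containing the species $S_i$ (every species appears in some complex, since $\S$ is the species set of the network), and for such $z$ we have $(x_n \vee 1)^z \ge x_{n,i} \vee 1 \to \infty$; (iii) let $T^{D,k}_{\{x_n\}}$ be the tier containing $z$; if $k = 1$ then $z \sim_D y_0$ and property 1 of Definition \ref{def31} forces $(x_n \vee 1)^{y_0} = C^{-1}(x_n\vee 1)^z(1+o(1)) \to \infty$, done; (iv) if $k > 1$, then $y_0 \succ_D z$, so property 2 gives $(x_n \vee 1)^z / (x_n \vee 1)^{y_0} \to 0$, hence $(x_n \vee 1)^{y_0} \to \infty$ since $(x_n\vee 1)^z \to \infty$ and the ratio vanishes — again a contradiction-free conclusion that $(x_n\vee 1)^{y_0} = \infty$ in the limit. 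Combining (iii) and (iv), $\lmt (x_n \vee 1)^{y_0} = \infty$ in all cases.

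I expect the only real subtlety to be step (ii): making sure there genuinely is a complex $z$ with $(x_n \vee 1)^z \to \infty$. This is where I would be careful. If $S_i$ itself is a complex, we are done immediately. If not, $S_i$ must appear inside some binary complex $S_i + S_j$ or $2S_i$ (the network being binary, these are the options, though the lemma as stated does not even need binariness); in either case $(x_n \vee 1)^{S_i + S_j} \ge x_{n,i}\vee 1 \to \infty$ and $(x_n\vee 1)^{2S_i} = (x_{n,i}\vee 1)^2 \to \infty$. The one genuinely delicate point is that a priori a species $S_i$ could fail to appear in any complex at all — but then $S_i$ is not really part of the network's dynamics, and in that degenerate case one should simply note that the coordinate $x_{n,i}$ is irrelevant and re-run the argument with an index $i$ for which $x_{n,i}\to\infty$ \emph{and} $S_i$ participates in some complex; such an index exists because a tier-sequence is required to have $|x_n|\to\infty$ along coordinates that the partition can ``see.'' Once this complex $z$ is in hand, the remainder is a routine two-case comparison using Definition \ref{def31}, and no estimate is more involved than multiplying through by a constant or a vanishing ratio.
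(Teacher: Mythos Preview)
Your proposal is correct and follows essentially the same route as the paper's proof: both locate a complex $z\in\C$ with $(x_n\vee 1)^z\to\infty$ (via the coordinate $x_{n,i}\to\infty$ guaranteed by the tier-sequence definition and a complex containing $S_i$) and then use the defining property of $\Td1$ to force $(x_n\vee 1)^{y_0}\to\infty$. The only cosmetic difference is that the paper collapses your two cases (iii)--(iv) into the single observation that $\lim_n (x_n\vee 1)^z/(x_n\vee 1)^{y_0}=C$ for some $C\ge 0$ whenever $y_0\in\Td1$, from which the conclusion is immediate.
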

\begin{proof}
Let $I=\{i \ | \ \lmt x_{n,i}=\infty\}$ and $y\in \C$ such that $y_i\neq 0$ for some $i \in I$.  By  definition, if $y_0\in \Td1$ then
\begin{align*}
\lmt \frac{(x_n \vee 1)^{y}}{(x_n \vee 1)^{y_0}} = C
\end{align*}
for some constant $C\ge 0$. Since $\lmt (x_n\vee 1)^{y} =\infty$,
the result follows. 
\end{proof}

In the next lemma, we provide relations between D-type partitions and S-type partitions.

\begin{lem}\label{lem22}  Let $\{x_n\}$  be a tier-sequence of a reaction network $(\S,\C,\Re)$ and let $y\in \C$. Then
\begin{align*}
\lim_{n \rightarrow \infty} \dfrac{\lambda_{y}(x_n)}{(x_n \vee 1)^y} = \begin{cases} \ 0 \hspace{1cm} \textrm{if} \quad y\in T^{S,\infty}_{\{x_n\}}\\
\ 1 \hspace{1cm} \textrm{if} \quad y \not \in T^{S,\infty}_{\{x_n\}}.
\end{cases}
\end{align*}
\end{lem}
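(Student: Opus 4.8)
The plan is to analyze the ratio $\lambda_y(x_n)/(x_n\vee 1)^y$ directly from the definitions of $\lambda_y$ and of $T^{S,\infty}_{\{x_n\}}$. Recall that $\lambda_y(x) = \prod_{i=1}^d \frac{x_i!}{(x_i-y_i)!}\mathbf 1_{\{x_i\ge y_i\}}$ and $(x\vee 1)^y = \prod_{i=1}^d (x_i\vee 1)^{y_i}$. I would split the index set $\{1,\dots,d\}$ into three groups according to the behavior of $x_{n,i}$: (a) indices $i$ with $y_i = 0$, which contribute a factor of $1$ to both products and can be ignored; (b) indices $i$ with $y_i \ge 1$ and $\limsup_n x_{n,i} < \infty$; and (c) indices $i$ with $y_i\ge 1$ and $\lim_n x_{n,i} = \infty$ (after passing along the tier-sequence, each $x_{n,i}$ has a limit in $[0,\infty]$, so this trichotomy is exhaustive up to the finitely-many-values case in (b)).

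For the case $y\in T^{S,\infty}_{\{x_n\}}$, by Definition \ref{def32}(1) we have $\lambda_y(x_n) = 0$ for all $n$, so the ratio is identically $0$ and the limit is $0$ provided $(x_n\vee 1)^y$ is bounded away from $0$ — which it is, since $(x_n\vee 1)^y\ge 1$ always. This case is immediate.

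For the case $y\notin T^{S,\infty}_{\{x_n\}}$, Definition \ref{def32}(2) gives $\lambda_y(x_n)\ne 0$ for all $n$, which forces $x_{n,i}\ge y_i$ for every $i$ and every $n$; in particular indices of type (b) have $x_{n,i}$ eventually constant only if the sequence takes finitely many values, but more to the point, for such $i$ the factor $\frac{x_{n,i}!}{(x_{n,i}-y_i)!}$ takes values in a fixed finite set and $(x_{n,i}\vee 1)^{y_i}$ likewise, so along a further subsequence their ratio converges — however, to get the clean limit $1$ I should instead argue factor-by-factor on type-(c) indices and observe that type-(b) indices must actually be handled carefully. The key computation is that for $x_{n,i}\to\infty$ we have $\frac{x_{n,i}!}{(x_{n,i}-y_i)!} = x_{n,i}(x_{n,i}-1)\cdots(x_{n,i}-y_i+1) \sim x_{n,i}^{y_i} = (x_{n,i}\vee 1)^{y_i}$, so each type-(c) factor of the ratio tends to $1$. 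For type-(b) indices with $y_i\ge 1$: since $x_{n,i}\ge y_i\ge 1$ for all $n$, we have $x_{n,i}\vee 1 = x_{n,i}$, so the ratio of the $i$th factors is $\frac{x_{n,i}!}{(x_{n,i}-y_i)!\,x_{n,i}^{y_i}}$, which need not tend to $1$ pointwise — so I anticipate the main subtlety here, and the resolution is that being a tier-sequence forces more structure. I would argue that the product over all $i$ with $y_i\ge 1$ of these factors, compared against $(x_n\vee 1)^y$, reduces exactly to the product over type-(c) indices (because on type-(b) indices $x_{n,i}\vee 1 = x_{n,i}$ makes $\frac{x_{n,i}!}{(x_{n,i}-y_i)!}$ and $x_{n,i}^{y_i}$ comparable but with a ratio that is a bounded sequence of rationals) — hence I would actually need to revisit whether the lemma as stated is using $(x_n\vee 1)^y$ with the understanding that on a tier-sequence every coordinate either goes to infinity or the relevant factor is asymptotically $x_{n,i}^{y_i}$ anyway because $x_{n,i}\to\infty$ whenever $x_{n,i}\ge 1$ eventually and... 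The cleanest route, and the one I would commit to writing, is: pass to the tier-subsequence so each $x_{n,i}$ converges in $[0,\infty]$; if $y\notin T^{S,\infty}$ then $x_{n,i}\ge y_i$ for all $n$ and all $i$, so for every $i$ with $y_i\ge 1$ the limit of $x_{n,i}$ is a positive integer or $+\infty$; if it is a finite positive integer $m$ then $x_{n,i}=m$ for large $n$ and the factor ratio is exactly $\frac{m!}{(m-y_i)!\,m^{y_i}}$, a fixed constant $-$ here I realize the stated limit "$1$" can only be literally correct if no such finite coordinate occurs, so the honest plan is to note that the proof will show the limit equals a finite positive constant and that the constant is $1$ precisely because of how the tiers interact — I would therefore reexamine the intended reading and, if needed, prove the weaker-but-sufficient statement that the limit is a positive finite constant, which is all that is used in \eqref{asymptotic of AV}.

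Thus the key steps, in order: (i) dispose of the $T^{S,\infty}$ case in one line using $(x_n\vee 1)^y\ge 1$; (ii) in the other case, use Definition \ref{def32}(2) to get $x_{n,i}\ge y_i$ for all $i,n$; (iii) factor the ratio over coordinates, noting $y_i=0$ coordinates are trivial; (iv) for coordinates with $x_{n,i}\to\infty$, use the falling-factorial asymptotic $\frac{x_{n,i}!}{(x_{n,i}-y_i)!}\sim x_{n,i}^{y_i}$; (v) observe that for the remaining coordinates $x_{n,i}$ is eventually a fixed positive integer, contributing a fixed constant factor, and conclude. I expect step (v) — reconciling the clean ``$1$'' with the possibility of bounded-but-nonconstant coordinates — to be the main obstacle, resolved by the tier-sequence hypothesis which pins each coordinate's limit in $[0,\infty]$ and hence (combined with integrality and $x_{n,i}\ge y_i$) makes bounded coordinates eventually constant.
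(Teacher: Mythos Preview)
Your coordinate-by-coordinate decomposition is correct and, in fact, more careful than the paper's own argument. The paper's proof is a single sentence: when $y\notin T^{S,\infty}_{\{x_n\}}$ it asserts that ``$\lambda_y(x_n)$ and $(x_n\vee 1)^y$ are polynomials with the same degree and the same leading coefficient (of $1$),'' which is exactly the falling-factorial asymptotic you isolate in your step (iv), stated tersely.

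You are right to flag step (v) as the subtle point, and your instinct is correct: the lemma as stated is not literally true. If some coordinate $x_{n,i}$ with $y_i\ge 2$ is eventually a fixed integer $m\ge y_i$ (which the tier-sequence hypothesis does force, as you note: integer sequences with a finite limit are eventually constant), then the $i$th factor of the ratio equals $\dfrac{m!}{(m-y_i)!\,m^{y_i}}$, a positive constant strictly less than $1$. For example, with $y=2S_1$ and $x_{n,1}\equiv 3$ the ratio is $6/9=2/3$. The paper's one-line proof tacitly assumes every coordinate contributing nontrivially to $y$ tends to infinity, which need not hold.

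Your fallback --- prove that the limit is a finite \emph{positive} constant --- is the right repair, and it is indeed all that the paper ever uses. In Corollary~\ref{cor23} the limit being a positive constant (rather than exactly $1$) still yields that $\lambda_{y'}(x_n)/\lambda_y(x_n)$ and $(x_n\vee 1)^{y'}/(x_n\vee 1)^y$ have the same limiting behavior (zero versus positive); in the proof of Theorem~\ref{thm32} the lemma is only invoked to show a term is bounded; and likewise in Lemma~\ref{lem51} and Theorem~\ref{thm61}. So your plan establishes the statement actually needed, and your diagnosis of where the stated ``$1$'' fails is accurate.
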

\begin{proof}
If $y\in T^{S,\infty}_{\{x_n\}}$, then  $\lambda_y(x_n) = 0$ for all $n$. If $y \not \in T^{S,\infty}_{\{x_n\}}$, then $\lambda_{y}(x_n)$ and $(x_n \vee 1)^y$ are polynomials with the same degree and the same leading coefficient (of 1).
\end{proof}

The following corollary is used throughout.

\begin{cor}\label{cor23}
Let $\{x_n\}$  be a tier-sequence of a reaction network $(\S,\C,\Re)$. Suppose $y \not \in T^{S,\infty}_{\{x_n\}}$ and $y \in T^{D,1}_{\{x_n\}}$. Then $y \in \Ts1$ and $\lmt \lambda_y(x_n)=\infty$.
\end{cor}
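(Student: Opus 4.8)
Looking at Corollary \ref{cor23}, I need to prove: if $\{x_n\}$ is a tier-sequence, $y \notin T^{S,\infty}_{\{x_n\}}$, and $y \in T^{D,1}_{\{x_n\}}$, then $y \in T^{S,1}_{\{x_n\}}$ and $\lim_{n\to\infty} \lambda_y(x_n) = \infty$.
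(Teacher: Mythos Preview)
Your proposal is not a proof: it merely restates the conclusion of the corollary without providing any argument. You have identified what needs to be shown, but you have not shown it.

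To actually prove the result, you need to connect the D-type and S-type tier structures. The paper does this by taking an arbitrary $y' \notin T^{S,\infty}_{\{x_n\}}$ and writing
\[
\frac{\lambda_{y'}(x_n)}{\lambda_{y}(x_n)} = \frac{\lambda_{y'}(x_n)}{(x_n\vee 1)^{y'}} \cdot \frac{(x_n\vee 1)^{y'}}{(x_n\vee 1)^{y}} \cdot \frac{(x_n\vee 1)^{y}}{\lambda_{y}(x_n)}.
\]
Lemma~\ref{lem22} forces the first and third factors to converge to $1$, so the ratio of $\lambda$-values is controlled by the ratio of $(x_n\vee 1)$-powers; since $y \in T^{D,1}_{\{x_n\}}$, this latter ratio is bounded, and hence no $y'$ can lie in a strictly higher S-tier than $y$. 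This gives $y \in T^{S,1}_{\{x_n\}}$. The divergence $\lambda_y(x_n)\to\infty$ then follows from Lemma~\ref{lem : Td1 goes infinity} together with Lemma~\ref{lem22}. You need to supply an argument along these lines.
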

\begin{proof} 
Let $y' \not \in T^{S,\infty}_{\{x_n\}}$. Note that
\[
 \frac{\lambda_{y'}(x_n)}{\lambda_{y}(x_n)} =  \frac{\lambda_{y'}(x_n)}{(x_n \vee 1)^{y'}}\cdot \frac{(x_n \vee 1)^{y'}}{(x_n\vee 1)^{y}}\cdot \frac{(x_n \vee 1)^{y}}{\lambda_{y}(x_n)}. 
\]
By Lemma \ref{lem22}, as $n\to \infty$ the first and third terms on the right of the above equation converge to 1.  Therefore, 
\[
\lim_{n\rightarrow \infty} \frac{\lambda_{y'}(x_n)}{\lambda_{y}(x_n)} = \lim_{n\to \infty}\frac{(x_n \vee 1)^{y'}}{(x_n\vee 1)^{y}}.
\]
Because $y\in T^{D,1}_{\{x_n\}}$, the last limit is either 0 (if $y' \notin \Td1$) or some $C>0$ (if $y' \in \Td1$).  Thus, $y'$ cannot be in a higher tier than $y$ in the S-type partition.  Hence,  $y \in \Ts1$. Moreover, since $y\not \in \Tsinf$, by \lemref{lem : Td1 goes infinity} and \lemref{lem22}, we have $\lmt \lambda_{y}(x_n)=\infty$. 
\end{proof}

\section{Tier structures for positive recurrence}
\label{sec:main}

In this section, we  provide a  theorem that provides sufficient conditions on  D-type and S-type tiers for the Markov process associated to a reaction network to be stable in the sense of Theorem \ref{thm11}.  We require the following lemma.

\begin{lem}\label{lemma:main}
Let $\mathcal A$ be the generator \eqref{gen5} of the continuous time Markov chain associated to a reaction network $(\S,\C,\Re)$ with mass-action kinetics \eqref{mass} and rate constants $\kappa_{y\to y'}$. Let $V$ be the function defined  in and around \eqref{eq:MainLyapunov}.   For each tier sequence $\{x_n\}$ 
there is a constant $C>0$  for which
\begin{eqnarray}\label{asymptotic of AV} \A V(x_n) \le \sum_{y\rightarrow y'\in \Re}\kappa_{y\rightarrow y'} \lambda_{y}(x_n)\Big ( \ln{\left(\frac{(x_n \vee 1)^{y'}}{(x_n \vee 1)^y}\right) +C \Big )}.
\end{eqnarray}
 \end{lem}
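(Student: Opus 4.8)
The plan is to compute $\mathcal{A}V(x_n)$ directly from the generator formula \eqref{gen5} applied to $V(x) = \sum_{i=1}^d v(x_i)$ with $v$ as in \eqref{eq:MainLyapunov}, and then to show that each summand over reactions $y\to y'$ is bounded above by the corresponding term on the right-hand side of \eqref{asymptotic of AV}. First I would fix a reaction $y \to y' \in \Re$ and write
\[
V(x_n + y' - y) - V(x_n) = \sum_{i=1}^d \big( v(x_{n,i} + y'_i - y_i) - v(x_{n,i}) \big),
\]
noting that only finitely many coordinates contribute since $y, y'$ are fixed vectors with bounded entries (indeed, since the network is binary, $|y'_i - y_i| \le 2$). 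The key elementary estimate is that for integers $x \ge 0$ and a bounded integer shift $k$, one has $v(x + k) - v(x) = k \ln(x \vee 1) + O(1)$, with the $O(1)$ uniform over $x$ and over the finitely many relevant values of $k$; when $x + k < 0$ the contribution is trivially $O(1)$ as well since $v \equiv 1$ there and only affects boundary terms. Summing over $i$, this yields
\[
V(x_n + y' - y) - V(x_n) = \sum_{i=1}^d (y'_i - y_i)\ln\big((x_n)_i \vee 1\big) + O(1) = \ln\!\left(\frac{(x_n \vee 1)^{y'}}{(x_n \vee 1)^{y}}\right) + O(1),
\]
where the $O(1)$ bound is uniform in $n$ and depends only on the (finite) network data.

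Next I would multiply through by the intensity $\lambda_{y\to y'}(x_n) = \kappa_{y\to y'}\lambda_y(x_n) \ge 0$, which preserves the inequality direction, and absorb the per-reaction $O(1)$ constants into a single constant $C > 0$ valid for all reactions (there are finitely many). Summing over $y \to y' \in \Re$ then gives exactly \eqref{asymptotic of AV}. One point requiring a little care: the identity $v(x+k) - v(x) = k\ln(x\vee 1) + O(1)$ should be checked to hold with the $O(1)$ genuinely uniform down to $x = 0$ and $x=1$, where $\ln(x \vee 1) = 0$; there the left side is itself $O(1)$, so the claim holds trivially, but it is worth stating so the reader sees the boundary of the state space (the feature distinguishing this discrete setting, as emphasized in Section \ref{sec:tier}) causes no trouble here. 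Also, when $y'_i - y_i < 0$ and $x_{n,i} < y_i$, the reaction has intensity zero anyway because $\lambda_y(x_n) = 0$ in that case, so the terms where $v$ is evaluated at a negative argument are multiplied by zero and can be dropped.

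The main obstacle — though it is more bookkeeping than genuine difficulty — is making the error term uniform across all of the following simultaneously: the finitely many reactions, the finitely many coordinates, the (at most finitely many, by binariness) values of the shifts $y'_i - y_i$, and all $n$ including those for which some coordinates $x_{n,i}$ stay bounded (possibly at $0$ or $1$) while others tend to infinity. The cleanest route is to prove the scalar lemma "there exists $c_k$ such that $|v(x+k) - v(x) - k\ln(x \vee 1)| \le c_k$ for all $x \in \Z_{\ge 0}$" for each fixed integer $k$, using the expansion $\ln\Gamma$-type estimates or simply $\ln(x+k) - \ln x \to 0$ and monotonicity for $x \ge 1$ plus direct evaluation at $x = 0$, then take $C = \max$ over the finite set of relevant $(k, \text{reaction})$ pairs, rescaled by the reaction rate constants. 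Since the lemma only asserts existence of \emph{some} constant $C > 0$ (allowed to depend on the network and rate constants but not on $n$), no sharpness is needed and the argument is complete once this uniformity is in hand.
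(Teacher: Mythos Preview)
Your proposal is correct and, if anything, yields a slightly stronger statement than the paper's own argument. The paper's proof exploits the tier-sequence hypothesis explicitly: it sets $I = \{i : x_{n,i} \to \infty\}$, bounds the contribution from coordinates outside $I$ by a constant, and for coordinates in $I$ uses asymptotic limits (e.g., $(1+\alpha/t)^t \to e^\alpha$ and $\ln(x_{n,i}+y'_i-y_i) - \ln x_{n,i} \to 0$) to absorb error terms one by one. Your route instead establishes the scalar inequality $|v(x+k) - v(x) - k\ln(x\vee 1)| \le c_k$ uniformly for all $x \in \Z_{\ge 0}$, which avoids the case split entirely and does not depend on coordinatewise limits existing; in particular your bound holds for every $x \in \Z^d_{\ge 0}$, not only along tier sequences. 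Two minor remarks: the parenthetical appeal to the network being binary is unnecessary and not assumed in the lemma --- all that matters is that the finitely many shifts $y'_i - y_i$ are bounded, which is automatic for any fixed finite network; and the phrase ``rescaled by the reaction rate constants'' at the end is a slip, since the constant $C$ in \eqref{asymptotic of AV} sits inside the parentheses and is already multiplied by $\kappa_{y\to y'}\lambda_y(x_n)$, so no rescaling by rate constants is needed when taking the maximum.
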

\begin{proof}
Let $I	= \{ i \ |\  x_{n,i} \rightarrow \infty,  \ \ \textrm{as} \ \ n \rightarrow \infty \} \neq \emptyset$.
Then for a reaction $y\rightarrow y' \in \Re$, there exists $C_{y\to y'}>0$ such that
\begin{align}
V&(x_n+y'-y)-V(x_n)\notag \\ 
&\le \dsum_{i\in I}[ (x_{n,i}+y'_i-y_i)(\ln{(x_{n,i}+y'_i-y_i)}-1) - x_{n,i}(\ln{(x_{n,i})}-1)] + C_{y\to y'}\notag\\
&=  \dsum_{i\in I}[ x_{n,i}\ln \left(1+\frac{y'_i-y_i}{x_{n,i}}\right)+y_i-y'_i + (y'_i-y_i)\ln{(x_{n,i}+y'_i-y_i)}] + C_{y\to y'}\notag,
\end{align}
where we simply grouped those terms not going to infinity into the constant.  Using the fact that $\lim_{t \to \infty} (1 + \tfrac{\alpha}{t})^t = e^\alpha$ for any $\alpha$, we have  that
\[
x_{n,i}\ln \left(1+\frac{y'_i-y_i}{x_{n,i}}\right)+y_i-y'_i \rightarrow 0, \ \ \ \textrm{as} \ \ n \rightarrow \infty,
\]
 for each $i\in I$.
Hence, there are $C_{y\to y'}'>0$ for which 
\[
V(x_n+y'-y)-V(x_n) \le \dsum_{i \in I} (y'_i-y_i)\ln (x_{n,i}+y'_i-y_i) + C_{y\to y'}',
\]
for each $n$.  Noting that 
\[
	\ln (x_{n,i}+y'_i-y_i)^{y'_i-y_i} - \ln(x_{n,i}^{y_i'-y_i}) \to 0, 
\]
as $n\to \infty$ implies the existence of a $C_{y\to y'}''>0$ and a $C_{y\to y'}'''>0$ for which 
\begin{align*}
V(x_n+y'-y)&-V(x_n)   \le \dsum_{i \in I}\ln \left(  x_{n,i}^{y'_i-y_i}\right) + C_{y\to y'}'' = \ln \left(  \dprod_{i\in I} \frac{x_{n,i}^{y'_i}}{x_{n,i}^{y_i}}\right) + C_{y\to y'}''\\
&\le   \ln{\left(\dprod_{i=1}^d \frac{(x_{n,i} \vee 1)^{y'_i}}{(x_{n,i} \vee 1)^{y_i}}\right)} + C_{y\to y'}''' = \ln{\left( \frac{(x_n \vee 1)^{y'}}{(x_n \vee 1)^y}\right)} +C_{y\to y'}'''.
\end{align*}
Hence, 
\begin{align*}
\mathcal{A}V(x_n) &= \sum_{y\to y'\in \Re} \kappa_{y\to y'} \lambda_y(x_n) (V(x_n+y'-y) - V(x_n))\\
&\le   \sum_{y\to y'\in \Re} \kappa_{y\to y'} \lambda_y(x_n) \left( \ln{\left( \frac{(x_n \vee 1)^{y'}}{(x_n \vee 1)^y}\right)} +C_{y\to y'}'''\right).
\end{align*}
The proof is completed by taking $C = \max_{y\to y'\in \Re}\{ C_{y\to y'}'''\}$.
\end{proof}
 
\bigskip 
 
 The following is our main analytic theorem related to tiers.
\begin{thm}\label{thm32}
Let $(\S,\C,\Re)$ be a reaction network. Suppose that
 \begin{align}\label{hyp:main}
T^{S,1}_{\{x_n\}} \cap D_{\{x_n\}}   \neq \emptyset.
\end{align}
for any tier-sequence $\{x_n\}$, where   $T^{S,i}_{\{x_n\}}$ are the S-type tiers and  $D_{\{x_n\}}$ is the set of source complexes for the descending reactions along $\{x_n\}$.  
Then for any choice of rate constants  the Markov process  with intensity functions \eqref{mass} associated to the reaction network $(\S,\C,\Re)$ satisfies the following: each state in a closed, irreducible component of the state space is positive recurrent; moreover, if  $\tau_{x_0}$ is the time for the process to enter the union of the closed irreducible components given an initial condition  $x_0$, then $\mathbb{E}[\tau_{x_0}] < \infty$.
\end{thm}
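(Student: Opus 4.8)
The plan is to verify the Foster–Lyapunov condition of Theorem~\ref{thm11} for the function $V$ of \eqref{eq:MainLyapunov}, arguing by contradiction. If no finite set $K$ with $\mathcal A V\le-1$ off $K$ exists then, since bounded subsets of the state space are finite, there is a sequence of states $\{x_n\}$ with $|x_n|\to\infty$ and $\mathcal A V(x_n)>-1$ for every $n$. By Lemma~\ref{lem21} I would pass to a subsequence, still denoted $\{x_n\}$, that is a tier-sequence, and then apply Lemma~\ref{lemma:main} to obtain, for all large $n$,
\[
\mathcal A V(x_n)\ \le\ \sum_{y\to y'\in\Re}\kappa_{y\to y'}\,\lambda_y(x_n)\Big(\ln\Big(\tfrac{(x_n\vee1)^{y'}}{(x_n\vee1)^{y}}\Big)+C\Big).
\]
It then suffices to show this upper bound tends to $-\infty$, which contradicts $\mathcal A V(x_n)>-1$; Theorem~\ref{thm11} then gives the stated conclusion.

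To estimate the sum, use the hypothesis \eqref{hyp:main} to fix a complex $y^\ast\in\Ts1\cap\Des$ together with an associated descending reaction $y^\ast\to z$, so that $y^\ast\in\Td1$ and $z$ lies in a strictly lower $D$-tier. Put $\Lambda_n:=\lambda_{y^\ast}(x_n)$. Since $y^\ast\in\Ts1\subseteq\C\setminus\Tsinf$ and $y^\ast\in\Td1$, Corollary~\ref{cor23} gives $\Lambda_n\to\infty$, Lemma~\ref{lem22} gives $\Lambda_n/(x_n\vee1)^{y^\ast}\to1$, and, because $y^\ast$ sits in the dominant $D$-tier, there is a constant $C'$ with $(x_n\vee1)^{y}\le C'(x_n\vee1)^{y^\ast}$ for every $y\in\C$ and all large $n$. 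I would then bound the sum term by term, isolating the summand indexed by $y^\ast\to z$: that summand equals $\Lambda_n h_n$ with $h_n\to-\infty$, since $\ln\big((x_n\vee1)^{z}/(x_n\vee1)^{y^\ast}\big)\to-\infty$. For any other reaction $y\to y'$ with $y\in\Td1$ one has $\lambda_y(x_n)\le(x_n\vee1)^{y}=O(\Lambda_n)$, while $\ln\big((x_n\vee1)^{y'}/(x_n\vee1)^{y}\big)$ is bounded above (it converges in $[-\infty,\infty)$, as $y'$ belongs to some $D$-tier), so the summand is $O(\Lambda_n)$. For a reaction $y\to y'$ with $y\notin\Td1$, set $t_n:=(x_n\vee1)^{y^\ast}/(x_n\vee1)^{y}\to\infty$; combining $\lambda_y(x_n)\le(x_n\vee1)^{y}$ with $(x_n\vee1)^{y'}\le C'(x_n\vee1)^{y^\ast}$ bounds the summand above by $(x_n\vee1)^{y}(\ln t_n+C'')=(x_n\vee1)^{y^\ast}(\ln t_n+C'')/t_n=o(\Lambda_n)$, using $(\ln t)/t\to0$. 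Adding the finitely many contributions gives $\mathcal A V(x_n)\le\Lambda_n\big(h_n+O(1)+o(1)\big)\to-\infty$, as required.

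The hard part will be the last class of terms — reactions whose source $y$ is \emph{not} in the dominant $D$-tier but whose product $y'$ may sit in a strictly higher tier. There the crude bound $|\ln((x_n\vee1)^{y'}/(x_n\vee1)^{y})|=O(\ln|x_n|)$ is too weak, because $\lambda_y(x_n)$ need only be $o(\Lambda_n)$ while $\lambda_y(x_n)\ln|x_n|$ need not be. The resolution is to measure the logarithm against $(x_n\vee1)^{y^\ast}$ instead of against $|x_n|$, which turns the contribution into $(x_n\vee1)^{y^\ast}(\ln t_n)/t_n$ with $t_n\to\infty$. A couple of routine points also require attention: when $\lambda_y(x_n)=0$ the summand vanishes, and when a factor $\ln(\cdots)+C$ is negative it should be replaced by $0$ in the upper bound. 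Everything else is supplied by results already in hand: that $y^\ast$ forces $\Lambda_n\to\infty$ and that top-tier complexes dominate all complexes are precisely Corollary~\ref{cor23}, Lemma~\ref{lem : Td1 goes infinity}, and Lemma~\ref{lem22}.
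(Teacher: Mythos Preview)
Your proof is correct and follows the same overall strategy as the paper: argue by contradiction, pass to a tier-sequence via Lemma~\ref{lem21}, apply Lemma~\ref{lemma:main}, and show the resulting upper bound tends to $-\infty$ by normalizing against $\Lambda_n=\lambda_{y^\ast}(x_n)$. The organization of the estimate differs slightly. The paper groups reactions by the $D$-tier relation between source and product (four sums according to $y'\succ_D y$, $y\succ_D y'$, $y\sim_D y'$, and the constant part), whereas you group by whether the source $y$ lies in $\Td1$. Your split is arguably more economical: the paper's treatment of the $y'\succ_D y$ sum requires inserting $\ln\big((x_n\vee1)^{y_0}/(x_n\vee1)^{y}\big)$ and then invoking $t\ln(1/t)\to0$, which is your $(\ln t_n)/t_n\to0$ step reached after one extra algebraic manipulation. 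Both arguments use the same lemmas (Lemma~\ref{lem22}, Corollary~\ref{cor23}) and the same dominant negative contribution from the descending reaction $y^\ast\to z$, so the difference is purely one of bookkeeping.
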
 

\begin{proof}
We will show there exists a finite set $K \subset \Z^d_{\ge 0}$ such that $\A V(x) \le -1$ for all $x \in K^c$.  An application of Theorem \ref{thm11}  then completes the proof.  We proceed with an argument by contradiction. For ease of notation, the positive constant $C$ appearing in different lines may vary. 

Suppose, in order to find a contradiction, that there exists a sequence $\{ x_n \}$ with $\lim_{n\rightarrow \infty}|x_n| =\infty$ and $\mathcal{A}V(x_n) > -1$ for all $n$. By Lemma \ref{lem21}, there exists a subsequence which is a tier-sequence.  For simplicity, we also denote this tier-sequence by $\{x_n\}$.  Denote the S-type tiers, D-type tiers, and source complexes for the descending reactions for this particular tier-sequence by $T^{S,i}_{\{x_n\}}, T^{D,i}_{\{x_n\}},$ and $\Des$, respectively.
Our main hypothesis \eqref{hyp:main} implies that there exists a reaction $y_0\rightarrow y'_0$ such that $y_0 \in \Td1 \cap T^{S,1}_{\{x_n\}}$ and $y_0'\in \Tdi$ for some  $i>1$. 
Starting with an application of Lemma \ref{lemma:main}, we have the existence of a 
$C>0$ for which
\begin{align}
 \mathcal{A}V(x_n) \nonumber &\le  \sum_{\substack{y\rightarrow y' \in \Re }} \kappa_{y\rightarrow y'}\lambda_{y}(x_n)\Big(\ln{\left( \frac{(x_n\vee 1)^{y'}}{(x_n\vee 1)^y} \right)}+C \Big) \notag\\
&=  \lambda_{y_0}(x_n) \Bigg( \underbrace{ \dsum_{\substack{y\rightarrow y' \in \Re \\ y'\succ_D y }} \frac{\kappa_{y\rightarrow y'}\lambda_{y}(x_n)}{\lambda_{y_0}(x_n)}\ln{\left(\frac{(x_n\vee 1)^{y'}}{(x_n\vee 1)^y}\right)}}_{I}
+ \underbrace{\dsum_{\substack{y\rightarrow y' \in \Re  \\ y \succ_D y' }} \frac{\kappa_{y\rightarrow y'}\lambda_{y}(x_n)}{\lambda_{y_0}(x_n)}\ln{\left(\frac{(x_n\vee 1)^{y'}}{(x_n\vee 1)^y}\right)}}_{II} \notag \\
&\hspace{.2in} +\underbrace{\dsum_{\substack{y\rightarrow y' \in \Re\\ y \sim_D y' }} \frac{\kappa_{y\rightarrow y'}\lambda_{y}(x_n)}{\lambda_{y_0}(x_n)}\ln{\left(\frac{(x_n\vee 1)^{y'}}{(x_n\vee 1)^y}\right)}}_{III} + 
\underbrace{\sum_{\substack{y\rightarrow y' \in \Re \vspace{0.3cm}}}\frac{\kappa_{y\rightarrow y'}\lambda_{y}(x_n)}{\lambda_{y_0}(x_n)}C}_{IV} \Bigg ). \label{eq:main eq for AV}
\end{align}
First note that by Corollary \ref{cor23}, we know $\lambda_{y_0}(x_n) \to \infty$ as $n \to \infty$.    To conclude the proof, we will show that term $II$ will converge to $-\infty$, as $n \to \infty$, and that all the other terms remain uniformly bounded in $n$.  One fact we will use repeatedly is the following:  because  $y_0 \in \Td1 \cap T^{S,1}_{\{x_n\}}$, there exists a constant $C'>0$ such that for all complexes $y\in \C$ and all $n$ large enough
\begin{align}\label{maintheorem1}
\frac{\lambda_y(x_n)}{\lambda_{y_0}(x_n)} < C' \hspace{1cm} \textrm{and} \hspace{1cm} \frac{(x_n\vee 1)^{y}}{(x_n\vee 1)^{y_0}} < C'.
\end{align}
Note that $\eqref{maintheorem1}$ immediately implies that terms $III$ and $IV$ are uniformly bounded in $n$.  

We turn to term $I$.  By adding and subtracting appropriate log terms,
\begin{align}
I &= \sum_{\substack{y\rightarrow y' \in \Re \\ y'\succ_D y }} \frac{\kappa_{y\rightarrow y'}\lambda_y(x_n)}{\lambda_{y_0}(x_n)}\ln{\left(\frac{(x_n\vee 1)^{y_0}}{(x_n\vee 1)^y}\right)} + \sum_{\substack{y\rightarrow y' \in \Re \\ y'\succ_D y }} \frac{\kappa_{y\rightarrow y'}\lambda_y(x_n)}{\lambda_{y_0}(x_n)}\ln{\left(\frac{(x_n\vee 1)^{y'}}{(x_n\vee 1)^{y_0}}\right)} \notag \\ 
&= \sum_{\substack{y\rightarrow y' \in \Re \\ y'\succ_D y }} \left( \frac{\kappa_{y\rightarrow y'}\lambda_y(x_n)}{\lambda_{y_0}(x_n)}\frac{(x_n\vee 1)^{y_0}}{(x_n \vee 1)^{y}} \right)\frac{(x_n\vee 1)^y}{(x_n \vee 1)^{y_0}}\ln{\left(\frac{(x_n\vee 1)^{y_0}}{(x_n\vee 1)^y}\right)} \label{maintheorem11}\\
&\hspace{.25in}+ \sum_{\substack{y\rightarrow y' \in \Re \\ y'\succ_D y }} \frac{\kappa_{y\rightarrow y'}\lambda_y(x_n)}{\lambda_{y_0}(x_n)}\ln{\left(\frac{(x_n\vee 1)^{y'}}{(x_n\vee 1)^{y_0}}\right)}\label{maintheorem12} 
\end{align}
By Lemma \ref{lem22}, a part of  term \eqref{maintheorem11} can be shown to be bounded:
\[ \lim_{n\rightarrow \infty} \frac{\kappa_{y\rightarrow y'}\lambda_y(x_n)}{\lambda_{y_0}(x_n)}\frac{(x_n\vee 1)^{y_0}}{(x_n \vee 1)^{y}} = \begin{cases}
\kappa_{y\rightarrow y'}, & \text{if} \ y\not \in \Tsinf\\
0, & \text{if} \ y\in \Tsinf.
\end{cases}
 \]
  In addition, since $y_0 \in T^{D,1}_{\{x_n\}}$ and $y \not \in T^{D,1}_{\{x_n\}}$,  
\[ 
\lim_{n\rightarrow \infty} \frac{(x_n\vee 1)^{y}}{(x_n\vee 1)^{y_0}} \ln \left({\frac{(x_n\vee 1)^{y_0}}{(x_n\vee 1)^{y}}}\right)=0,
\] 
where we are utilizing  $\lim_{t \to 0^+} t \ln(1/t) = 0$.  We conclude that the term \eqref{maintheorem11} converges to zero as $n\to \infty$.   Finally, \eqref{maintheorem1} shows that the term \eqref{maintheorem12} is uniformly bounded in $n$.

We now turn to showing that term $II$ converges to $-\infty$, as $n\to \infty$.  We have
\ \begin{align}
II =& \sum_{\substack{y\rightarrow y' \in \Re \\ y \succ_D y' }} \frac{\kappa_{y\rightarrow y'}\lambda_y(x_n)}{\lambda_{y_0}(x_n)}\ln{\left(\frac{(x_n\vee 1)^{y'}}{(x_n\vee 1)^y}\right)} \notag\\
=&  \frac{\kappa_{y_0\rightarrow y_0'}\lambda_{y_0}(x_n)}{\lambda_{y_0}(x_n)}\ln{\left(\frac{(x_n\vee 1)^{y'_0}}{(x_n\vee 1)^{y_0}}\right)}+ \dsum_{\substack{y\rightarrow y' \in \Re\setminus \{y_{_0} \rightarrow y_0'\} \\ y\succ_D y'}} \frac{\kappa_{y\rightarrow y'}\lambda_{y}(x_n)}{\lambda_{y_0}(x_n)}\ln{\left(\frac{(x_n\vee 1)^{y'}}{(x_n\vee 1)^y}\right)} \label{maintheorem21},
\end{align}
where we recall that $y_0'$ is the product complex of the descending reaction $y_0 \to y_0'$.  Note that the first term on the right of \eqref{maintheorem21}, converges to $-\infty$ since $\kappa_{y_0\to y_0'} > 0$ and
\begin{align*}
\lmt\ln{\left(\frac{(x_n\vee 1)^{y_0'}}{(x_n\vee 1)^{y_0}}\right)} =-\infty.
\end{align*} 
The second term on the right of \eqref{maintheorem21} may be an empty sum.  However, if there are any terms in the sum, they must be less than or equal to zero for $n$ large enough.     Indeed, this follows because  
\[
  \lmt\ln{\left(\frac{(x_n\vee 1)^{y'}}{(x_n\vee 1)^{y}}\right)} =-\infty,
\]
and $\lambda_y(x_n)\ge 0$.
 Therefore, we conclude that the term $II$ converges to $-\infty,$ as $n \rightarrow \infty$.

Hence, we must conclude that  $\lim_{n\rightarrow \infty} \mathcal{A}V(x_n) = -\infty $. However, this is in contradiction to the assumption that we made at the beginning of this proof, and the result is shown. 
\end{proof}

We have following corollary of the \thmref{thm32}.

\begin{cor}\label{cor33}Let $(\S,\C,\Re)$ be a reaction network. Suppose that 
\begin{align*}
\Des \neq \emptyset \quad \text{and} \quad \Td1=\Ts1,
\end{align*}
for any tier-sequence $\{x_n\}$.
Then for any choice of rate constants  the Markov process with intensity functions \eqref{mass} associated to the reaction network $(\S,\C,\Re)$ satisfies the following: each state in a closed, irreducible component of the state space is positive recurrent; moreover, if  $\tau_{x_0}$ is the time for the process to enter the union of the closed irreducible components given an initial condition  $x_0$, then $\mathbb{E}[\tau_{x_0}] < \infty$. 
\end{cor}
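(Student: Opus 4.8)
The plan is to deduce Corollary~\ref{cor33} directly from Theorem~\ref{thm32} by verifying that its hypotheses imply hypothesis~\eqref{hyp:main}. Accordingly, I would fix an arbitrary tier-sequence $\{x_n\}$ of $(\S,\C,\Re)$ and aim to show $T^{S,1}_{\{x_n\}} \cap D_{\{x_n\}} \neq \emptyset$; once this is established for every tier-sequence, Theorem~\ref{thm32} applies verbatim and delivers exactly the asserted conclusion (positive recurrence of states in closed irreducible components, together with $\mathbb{E}[\tau_{x_0}] < \infty$).

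The key observation is purely definitional: $D_{\{x_n\}}$ consists of the source complexes $y$ of descending reactions $y \to y'$, and a reaction is descending along $\{x_n\}$ precisely when $y \in T^{D,1}_{\{x_n\}}$. Hence $D_{\{x_n\}} \subseteq T^{D,1}_{\{x_n\}}$ holds for any tier-sequence, with no hypotheses needed. Invoking the assumption $T^{D,1}_{\{x_n\}} = T^{S,1}_{\{x_n\}}$ then gives $D_{\{x_n\}} \subseteq T^{S,1}_{\{x_n\}}$, so that $T^{S,1}_{\{x_n\}} \cap D_{\{x_n\}} = D_{\{x_n\}}$. Combining this with the remaining assumption $D_{\{x_n\}} \neq \emptyset$ yields $T^{S,1}_{\{x_n\}} \cap D_{\{x_n\}} \neq \emptyset$, which is precisely \eqref{hyp:main}.

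There is essentially no obstacle here; the only point deserving a sentence of care is that $T^{S,1}_{\{x_n\}}$ be genuinely defined for the tier-sequences in question, i.e.\ that $\C \neq T^{S,\infty}_{\{x_n\}}$. This is automatic: the hypothesis already presupposes the equality $T^{D,1}_{\{x_n\}} = T^{S,1}_{\{x_n\}}$ makes sense for any tier-sequence, and moreover $D_{\{x_n\}} \neq \emptyset$ forces a descending reaction $y_0 \to y_0'$ whose source satisfies $y_0 \in T^{D,1}_{\{x_n\}} = T^{S,1}_{\{x_n\}}$, so the top S-tier is nonempty. All of the analytic content---the Lyapunov estimate and the tier-by-tier control of $\mathcal{A}V(x_n)$---is already contained in Lemma~\ref{lemma:main} and Theorem~\ref{thm32}, so this corollary requires only the containment argument above.
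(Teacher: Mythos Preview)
Your argument is correct and matches the paper's treatment: the paper states Corollary~\ref{cor33} as an immediate consequence of Theorem~\ref{thm32} without giving a separate proof, and your observation that $D_{\{x_n\}} \subseteq T^{D,1}_{\{x_n\}} = T^{S,1}_{\{x_n\}}$ is exactly the intended one-line reduction.
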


\section{Network structures that guarantee positive recurrence}
\label{sec:applications}

With Theorem \ref{thm32} and Corollary \ref{cor33} in hand, we turn to proving our main results: Theorems \ref{thm:main1} and \ref{thm52}. 

\subsection{The single linkage class case}
In the papers  \cite{AndGAC_ONE2011, AndBounded_ONE2011} it was shown that deterministic models of reaction networks with a single weakly reversible linkage class were persistent and bounded.  In this section, we generalize this result to the stochastic setting with the additional assumption that all in-flows and out-flows are present.
 We begin with a lemma.

\begin{lem}\label{lem for single tier1}
Let $(\S,\C,\Re)$ be a reaction network with $\S=\{S_1,S_2,\cdots,S_d\}$. Suppose $S_i \to \emptyset \in \Re$ for some species $S_i$. For a tier sequence $\{x_n\}$, if $S_i \in \Td1$ , then 
\begin{align}\label{suffcondition}
S_i \in T^{S,1}_{\{x_n\}} \cap D_{\{x_n\}}.
\end{align}  
\end{lem}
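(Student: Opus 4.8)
The goal is to show that if $S_i \to \emptyset \in \Re$ and $S_i \in \Td1$ for a tier-sequence $\{x_n\}$, then $S_i$ sits in the top S-type tier $\Ts1$ and is also the source of a descending reaction, i.e.\ $S_i \in D_{\{x_n\}}$. The plan is to break this into the two conjuncts of \eqref{suffcondition} and handle each separately.

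First I would establish that $S_i \notin \Tsinf$. Since $S_i \in \Td1$, Lemma \ref{lem : Td1 goes infinity} gives $\lmt (x_n \vee 1)^{S_i} = \lmt (x_{n,i}\vee 1) = \infty$, so $x_{n,i}\to\infty$; in particular $x_{n,i}\ge 1$ for large $n$, which means $\lambda_{S_i}(x_n) = x_{n,i} \ne 0$ for large $n$. To get $\lambda_{S_i}(x_n)\ne 0$ for \emph{all} $n$ (as required by the definition of $\Tsinf$ versus the finite S-type tiers), one either passes to the tail of the sequence or notes that a tier-sequence can be taken with $x_{n,i}\ge 1$ for all $n$ after discarding finitely many terms — this is a harmless adjustment since tier structure is a tail property. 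Hence $S_i \notin \Tsinf$, and then Corollary \ref{cor23}, applied with $y = S_i$ (which satisfies $S_i \notin \Tsinf$ and $S_i \in \Td1$), immediately yields $S_i \in \Ts1$. This disposes of the first conjunct.

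Next I would show $S_i \in D_{\{x_n\}}$, i.e.\ that the out-flow reaction $S_i \to \emptyset$ is a descending reaction along $\{x_n\}$. By definition this requires the source $S_i \in \Td1$ — which is our hypothesis — and the product $\emptyset \in \Tdi$ for some $i > 1$, i.e.\ $\emptyset \notin \Td1$. But $(x_n \vee 1)^\emptyset = 1$ for all $n$, while $(x_n \vee 1)^{S_i} = x_{n,i}\vee 1 \to \infty$ by the first paragraph; so
\[
\lmt \frac{(x_n\vee 1)^{\emptyset}}{(x_n\vee 1)^{S_i}} = 0,
\]
which by the ordering axioms of a D-type partition (Definition \ref{def31}) forces $\emptyset$ to lie in a strictly lower D-type tier than $S_i$. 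Hence $S_i \to \emptyset$ is descending and $S_i \in D_{\{x_n\}}$.

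I do not anticipate a serious obstacle here; the lemma is essentially a direct unpacking of the definitions of the two tier structures combined with Corollary \ref{cor23} and Lemma \ref{lem : Td1 goes infinity}. The only mild subtlety is the distinction between ``$\lambda_{S_i}(x_n)\ne 0$ for large $n$'' and ``for all $n$'' in the definition of $\Tsinf$; the cleanest fix is to observe that since $S_i \in \Td1$ already forces $x_{n,i}\to\infty$, after removing finitely many initial terms we have a tier-sequence with the same tiers for which $\lambda_{S_i}(x_n) = x_{n,i} \ge 1 > 0$ throughout, so $S_i$ cannot be in $\Tsinf$. Everything else is immediate from the cited results.
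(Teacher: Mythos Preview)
Your proposal is correct and follows essentially the same approach as the paper: both use Lemma~\ref{lem : Td1 goes infinity} to see that $\emptyset\notin\Td1$ (hence $S_i\to\emptyset$ is descending) and Corollary~\ref{cor23} together with $\lambda_{S_i}(x_n)\ne 0$ to place $S_i$ in $\Ts1$. The only differences are cosmetic --- you handle the two conjuncts in the opposite order and are more explicit about the ``for all $n$'' versus ``for large $n$'' issue, which the paper glosses over.
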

\begin{proof}
Note that Lemma \ref{lem : Td1 goes infinity} implies $\emptyset \notin \Td1$.
Hence, $S_i \in D_{\{x_n\}}$. Because $\lambda_{S_i}(x_n) \ne 0$,  Corollary \ref{cor23} implies $S_i \in T^{S,1}_{\{x_n\}}$.
%
%
%
%
%
 \end{proof}

We now prove  Theorem \ref{thm:main1}, which we restate here.

\begin{customthm}{1}
\textit{Let $(\S,\C, \Re)$ be a weakly reversible, binary reaction network that has a single linkage class.   Let $\widetilde \Re =  \Re \cup_{S\in \S} \{\emptyset \to S, S \to \emptyset\}$ and $\widetilde{\C} = \C \cup\{\emptyset\}\cup\{S \ | \ S \in \S\}$. Then, for any choice of rate constants, every state of the Markov process with intensity functions \eqref{mass} associated to the reaction network $(\S,\widetilde{\C},\widetilde \Re)$ is positive recurrent.  
}
\end{customthm}


\begin{proof}
For concreteness, order the species as $\S = \{S_1,\dots,S_d\}$.

First suppose $\C$ consists of either only binary complexes or only unary complexes. Then $ (y'-y) \cdot \vec{1}=0$ for all $y \to y' \in \Re$, where $\vec{1}=(1,1,\dots,1)\in \Z^d$. Let $\A$ be a generator of the Markov process associated to the reaction network $(\S,\widetilde \C, \widetilde \Re)$. Then for the function $W(x)=x_1+x_2+\cdots+x_d$,  we have
\begin{align*}
\mathcal{A}W(x) 
&= - \sum_{i=1}^d\kappa_{S_i \rightarrow \emptyset}x_{i} + \sum_{i=1}^d \kappa_{\emptyset \rightarrow S_i}
\end{align*}
 Thus, for an arbitrary sequence $\{x_n\} \in \Z^d_{\ge 0}$ such that $|x_n|\rightarrow \infty$, as $n\rightarrow \infty$, 
\begin{align*}
\mathcal{A}W(x_n) \rightarrow -\infty.
\end{align*}
This implies $\mathcal{A}W(x) < -1$ for all $x$ but finitely many.  Hence, we may apply Theorem \ref{thm11}.  Noting that weak reversibility implies that all states  are contained within a closed, irreducible component of the state space  then finishes the proof.

Now we suppose $\C$ does not contain only binary complexes or only unary complexes. Let $\{x_n\}$ be a tier-sequence. We will show that \eqref{hyp:main} holds for the expanded network $(\S,\widetilde{\C},\widetilde \Re)$, in which case an application of Theorem \ref{thm32} is applicable.  Noting that weak reversibility implies that all states  are contained within a closed, irreducible component of the state space  then finishes the proof. 

There are three cases to consider.

\vspace{.1in}
\noindent \textbf{Case 1.}  Assume that all complexes in $\C$ are in $\Td1$.\\

 If there is no unary complex in $\mathcal{C}$, then we must have $\emptyset\in \C$ (since not all complexes are binary). However, by Lemma \ref{lem : Td1 goes infinity} we know $\emptyset \notin \Td1$.  Since this would contradict that all complexes in $\C$ are in $\Td1$, it must be that at least  one unary complex is in $\C$. Since a unary complex is in $\Td1$, \lemref{lem for single tier1} implies $D_{\{x_n\}} \cap \Ts1 \neq \emptyset$.

\vspace{.1in}
\noindent \textbf{Case 2.} Assume that some of the complexes in $\C$ are not in $\Td1$, and one complex in $D_{\{x_n\}}$ is binary.\\

Since (i) not all complexes in $\C$ are in $\Td1$, and (ii) $(\S,\C,\Re)$ is weakly reversible, there is a reaction $y_0 \rightarrow y_0'$ such that $y_0\in \Td1$ and  $y'_0 \not \in \Td1$. We assume  that $y_0=S_i+S_j$ for some $i,j \in \{1,2,\dots ,d\}$ (where we allow $i= j$). 

If $y_0 \in \Ts1$, then we may conclude the proof by an application of  Theorem \ref{thm32}.  Hence, we  assume that  $y_0 \not \in \Ts1$, and must demonstrate the existence of a descending reaction $y\to y'$ such that $y \in \Ts1$. 

 By  Corollary \ref{cor23}, we must have $y_0 \in T^{S,\infty}_{\{x_n\}}$. This means $i\neq j$, and, without loss of generality, $x_{n,j}=0$ for all $n$.  We further conclude from Lemma \ref{lem : Td1 goes infinity} that $x_{n,i}\to \infty$ as $n \to \infty$.
Also, since $S_i +S_j\in \Td1$, it must be that $S_i \in \Td1$ since when $x_{n,j} = 0$, we have
\[
	(x_n \vee 1)^{S_i} = x_{n,i} = x_{n,i} \cdot (x_{n,j} \vee 1) =(x_n\vee 1)^{S_i+S_j}.
\]
An application of \lemref{lem for single tier1} then completes the argument.


\vspace{.1in}
\noindent \textbf{Case 3.} Assume that some of the complexes in $\C$ are not in $\Td1$, and one complex in $D_{\{x_n\}}$ is unary.\\

An application of \lemref{lem for single tier1}  completes the argument.
\end{proof}

Consider the following example of substrate-enzyme kinetics.
\begin{example}
\begin{align*}
&S+E \xleftrightharpoons[\kappa_1]{\kappa_2} SE \xleftrightharpoons[\kappa_3]{\kappa_4} E+P,\\ & S \xleftrightharpoons[\kappa_5]{\kappa_6} \emptyset \xleftrightharpoons[\kappa_7]{\kappa_8} E,   \quad
SE \xleftrightharpoons[\kappa_9]{\kappa_{10}} \emptyset \xleftrightharpoons[\kappa_{11}]{\kappa_{12}} P 
\end{align*} 
This reaction network consists of a single linkage class which is weakly reversible (the top linkage class), and in-flows and out-flows for all species.  Moreover, the state space $\mathbb{S} = \Z^{4}_{\ge 0}$ is irreducible.    Therefore the associated Markov process for this reaction network is positive recurrent for any choice of rate constants $\kappa_1,\kappa_2,\dots,\kappa_{12}$. \hfill $\triangle$
\end{example}

\subsection{Double-full binary reaction networks}
In this section, we prove Theorem \ref{thm52}.  We begin with a necessary lemma that captures the usefulness of the double-full assumption.

\begin{lem}\label{lem51}
Let $(\S, \C, \Re)$ be a double-full, binary reaction network with $\mathcal{S} =\{ S_1,S_2,\dots, S_d\}$. Let $\{x_n\}$ be a tier-sequence of $(\S, \C, \Re)$. Then the following holds:
\begin{enumerate}
\item If $S_i+S_j \in T^{D,1}_{\{x_n\}}$, then $2S_i, 2S_j \in T^{D,1}_{\{x_n\}}$. Thus, $\dlim_{n\rightarrow \infty} \frac{(x_{n,i}\vee 1)}{(x_{n,j}\vee 1)} = C$ for some constant $C>0$.
\item $T^{D,1}_{\{x_n\}} \subset \{ S_i + S_j \ | \ i,j =1,2,\dots, d\}$ and $2S_i \in \Td1$ for some $i=1,2\dots,d$ .
That is, $T^{D,1}_{\{x_n\}}$ consists of only binary complexes and always contains at least one double complex.
\item $T^{D,1}_{\{x_n\}} = T^{S,1}_{\{x_n\}}$.
\end{enumerate}
\end{lem}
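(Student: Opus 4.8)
The plan is to prove the three assertions in order, each relying on the previous one; the double-full hypothesis enters only through the availability of the complexes $2S_1,\dots,2S_d\in\C$, alongside the preliminary results of Section~\ref{sec:tier}. For (1), fix $y_0=S_i+S_j\in\Td1$ (the case $i=j$ is trivial). Since $\Td1$ is the highest D-tier, for every $y\in\C$ the ratio $(x_n\vee1)^y/(x_n\vee1)^{y_0}$ converges to a limit in $[0,\infty)$, with the limit strictly positive exactly when $y\sim_D y_0$. I would apply this with $y=2S_i$ and with $y=2S_j$: since $(x_n\vee1)^{2S_i}/(x_n\vee1)^{y_0}=(x_{n,i}\vee1)/(x_{n,j}\vee1)$ and similarly with $i,j$ swapped, both $L_i:=\lmt(x_{n,i}\vee1)/(x_{n,j}\vee1)$ and $L_j:=\lmt(x_{n,j}\vee1)/(x_{n,i}\vee1)$ exist in $[0,\infty)$. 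Their defining sequences are mutual reciprocals, so $L_iL_j=1$, forcing $L_i,L_j\in(0,\infty)$; equivalently $2S_i\sim_D y_0$ and $2S_j\sim_D y_0$, which is the claim with $C=L_i$.

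For (2), I would first rule out non-binary complexes from $\Td1$. In a binary network a non-binary complex is either $\emptyset$ or a unary complex $S_i$. The zero complex cannot lie in $\Td1$, since \lemref{lem : Td1 goes infinity} would force $1=(x_n\vee1)^{\emptyset}\to\infty$. If $S_i\in\Td1$, then, using $2S_i\in\C$ and \lemref{lem : Td1 goes infinity}, $(x_n\vee1)^{2S_i}/(x_n\vee1)^{S_i}=(x_{n,i}\vee1)\to\infty$, so $2S_i$ would sit strictly above $S_i$ in the D-partition, contradicting $S_i\in\Td1$. Hence $\Td1$ consists of binary complexes only; being a nonempty block of the partition it contains some $S_i+S_j$, and then (1) gives $2S_i\in\Td1$, so $\Td1$ contains a double complex.

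For (3), the inclusion $\Td1\subseteq\Ts1$ would follow from Corollary~\ref{cor23} once I show that every $y_0\in\Td1$ lies outside $\Tsinf$: by (2) and (1), $y_0=S_i+S_j$ with $2S_i,2S_j\in\Td1$, so \lemref{lem : Td1 goes infinity} gives $x_{n,i},x_{n,j}\to\infty$, whence $\lambda_{y_0}(x_n)\neq0$ for all large $n$, so $y_0\notin\Tsinf$. For the reverse inclusion, fix some $w\in\Td1$; by the above $w\in\Ts1\setminus\Tsinf$. Given $y_0\in\Ts1$ (so $y_0\notin\Tsinf$ too), \lemref{lem22} gives $\lambda_{y_0}(x_n)\sim(x_n\vee1)^{y_0}$ and $\lambda_w(x_n)\sim(x_n\vee1)^w$, while $\lmt\lambda_{y_0}(x_n)/\lambda_w(x_n)\in(0,\infty)$ since $y_0$ and $w$ share an S-tier; combining, $\lmt(x_n\vee1)^{y_0}/(x_n\vee1)^w\in(0,\infty)$, i.e.\ $y_0\sim_D w$, so $y_0\in\Td1$.

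The computations above are all short, so the proof has no real obstacle in the usual sense; the one point that demands care is the passage between the D-type and S-type orderings, which is only legitimate once one knows the relevant complexes are not in $\Tsinf$ (the obstruction coming from the boundary of $\Z^d_{\ge0}$, where intensities vanish). The reciprocal-product identity $L_iL_j=1$ of step (1) is the device that both proves (1) and, by producing $x_{n,i},x_{n,j}\to\infty$, supplies precisely the $\Tsinf$-avoidance needed to run Corollary~\ref{cor23} and \lemref{lem22} in step (3).
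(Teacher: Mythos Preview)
Your proof is correct and follows essentially the same approach as the paper's, with the same reliance on \lemref{lem : Td1 goes infinity}, \lemref{lem22}, and Corollary~\ref{cor23}. Your argument for part~(2) is in fact slightly cleaner than the paper's: you use \lemref{lem : Td1 goes infinity} directly to conclude $x_{n,i}\to\infty$ from $S_i\in\Td1$ and then compare with $2S_i$, whereas the paper splits into the cases $i\in I$ and $i\notin I$; your reciprocal-product argument in part~(1) is just a repackaging of the paper's contradiction argument.
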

\begin{proof} 
Let $I=\{i \ | \ \lmt x_{n,i} = \infty  \}$.

For the first claim, if $i=j$, then the result is trivial. Thus, let $i \neq j$. Suppose $2S_i \not \in T^{D,1}_{\{x_n\}}$. Then
\begin{align*}
\lmt \frac{(x_n \vee 1)^{2S_i}}{(x_n\vee 1)^{S_i+S_j}}
=\lmt \frac{(x_{n,i} \vee 1)}{(x_{n,j}\vee 1)} = 0.
\end{align*}
Hence, 
\begin{align*}
\dlim_{n\rightarrow \infty} \frac{(x_n \vee 1)^{S_i + S_j}}{(x_n\vee 1)^{2S_j}}
=\dlim_{n\rightarrow \infty} \frac{(x_{n,i} \vee 1)}{(x_{n,j}\vee 1)} = 0.
\end{align*}
This implies $2S_j \succ_D S_i+S_j$ which is in contradiction to the assumption $S_i+S_j \in \Td1$. Therefore, $2S_i \in T^{D,1}_{\{x_n\}}$. In same way, we can show $2S_j \in T^{D,1}_{\{x_n\}}$.

We turn to the second claim. We will show that unary complexes and the zero complex cannot be in $\Td1$. First $\emptyset \not \in \Td1$ follows by \lemref{lem : Td1 goes infinity}. Suppose now that $S_m \in \Td1$ for some $m$. Then either $2S_m \succ_D S_m$ or $2S_k   \succ_D S_m$ for some $k\in I$, because  
\begin{align*}
 &\lmt \frac{(x_n\vee 1)^{S_m}}{(x_n \vee 1)^{2S_m}} = \lmt \frac{1}{x_{n,m}} = 0 \quad \text{if} \ \ m \in I \quad \text{and}, \\ 
&\lmt \frac{(x_n\vee 1)^{S_m}}{(x_n \vee 1)^{2S_k}} = \lmt \frac{x_{n,m}}{x_{n,k}^2} = 0 \quad \text{if} \ \ m \not \in I.
\end{align*}
Thus $S_m \not \in \Td1$. Part 1 shows that there is at least one $i$ for which $2S_i\in \Td1$.

For the last claim, we will first show that $\Ts1 \subset \Td1$. Let $y \in \Ts1$. By result 2, $2S_i \in \Td1$ for some $i$. Note that, by Corollary \ref{cor23}, $2S_i \in \Ts1$ since \lemref{lem : Td1 goes infinity} implies that $\lambda_{2S_i}(x_n)\neq 0$ for large $n$. Applying  \lemref{lem22}, we have \[\lmt \frac{(x_n\vee 1)^{2S_i}}{(x_n\vee 1)^y} = \lmt\frac{\lambda_{2S_i}(x_n)}{\lambda_y(x_n)} = C \quad \text{for some constant $C>0$}.\]
This means that $y \sim_D 2S_i$. Therefore $y \in \Td1$ and we conclude that $\Ts1 \subset \Td1$. Now we will show $\Td1 \subset \Ts1$. Let $y \in \Td1$. It is sufficient to show that $y \not \in \Tsinf$ by Corollary \ref{cor23}. By result 2, $y=S_i +S_j$ for some $i$ and $j$ (where we allow $i\neq j$), and $2S_i \in \Td1$ and $2S_j \in \Td1$. By \lemref{lem : Td1 goes infinity}, $x_{n,i}\to \infty$ and $x_{n,j}\to \infty$, as $n\to \infty$.  Therefore $\lambda_{y}(x_n) \neq 0$ for large $n$ and hence $y \not \in \Tsinf$.
\end{proof}

\lemref{lem51} concludes that $T^{D,1}_{\{x_n\}}$ and $T^{S,1}_{\{x_n\}}$ for a double-full, binary reaction network are always equal and consist of binary complexes. Now we restate the second main result \thmref{thm52} with its proof.

\begin{customthm}{2}
Let $(\S,\C, \Re)$ be a binary reaction network satisfying the following two conditions:
\begin{enumerate}
\item the reaction network is double-full, and
\item for each double complex (of the form $2S_i$) there is a directed path within the reaction graph beginning with the double complex itself and ending with either a unary complex (of the form $S_j$) or  the zero complex.
\end{enumerate} 
Then, for any choice of rate constants,  the Markov process with intensity functions \eqref{mass} associated to the reaction network $(\S,\C,\Re)$ satisfies the following: each state in a closed, irreducible component of the state space is positive recurrent; moreover, if  $\tau_{x_0}$ is the time for the process to enter the union of the closed irreducible components given an initial condition  $x_0$, then $\mathbb{E}[\tau_{x_0}] < \infty$.
\end{customthm}
\begin{proof}
Let $\{x_n\}$ be a tier-sequence. Result 3 in Lemma \ref{lem51} shows $T^{D,1}_{\{x_n\}} = T^{S,1}_{\{x_n\}}$.  Thus, so long as a descending reaction can be shown to exist, an application of Corollary \ref{cor33} will complete the proof.

 By result 2 in Lemma \ref{lem51}, there exists a double complex $2S_i \in T^{D,1}_{\{x_n\}}$ for some $i$. By our hypothesis, there exists a directed path from $2S_i$ to a unary or the zero complex $y'$ in the reaction graph. According to result 2 in Lemma \ref{lem51}, $y' \not \in \Td1$. Therefore a descending reaction exists within the directed path from $2S_i$ to $y'$.
\end{proof}

We demonstrate    \thmref{thm52} with an example.

\begin{example}
The following reaction network contains $5$ species, $14$ complexes and $14$ reactions. 
\begin{align*}\label{ex}
& 2A \xrightarrow{\kappa_1} \  A+B \xleftrightharpoons[\kappa_2]{\kappa_3} B   \\
& 2D \xleftrightharpoons[\kappa_4]{\kappa_5}  A\xleftrightharpoons[\kappa_6]{\kappa_7} \  \ 2C \xrightarrow{\kappa_8} B+C  \\
&2B \xrightarrow{\kappa_9} \emptyset \xleftrightharpoons[\kappa_{10}]{\kappa_{11}} D \xleftrightharpoons[\kappa_{12}]{\kappa_{13}} 2E \\
& C \xrightarrow{\kappa_{13}} A+C \xrightarrow{\kappa_{14}} C+E. 
\end{align*}
This binary reaction network is double-full. Moreover, for each double complex ($2A$, $2B$, $2C$, $2D$, $2E$ and $2F$), there is a directed path within the reaction graph beginning with the double complex itself and ending with either a unary complex or the zero complex. Therefore the conditions in  Theorem \ref{thm52} hold. Since $\mathbb{S} = \Z^5_{\ge 0}$ is irreducible for this model, the associated continuous time Markov chain is positive recurrent regardless of choice of the rate constants $\kappa_1,\dots,\kappa_{14}$.\hfill $\triangle$
\end{example}

\subsection{More results on double-full, binary reaction networks}
In this section, we  provide classes of double-full, binary reaction networks for which condition 2 of Theorem \ref{thm52} (the ``path condition'') does not hold, but for which the conclusions of Theorem \ref{thm52} still hold.

We begin with a technical lemma.

\begin{lem}\label{lem53}
Let $(\S,\C,\Re)$ be a double-full, binary reaction network. Suppose the following:
\begin{enumerate}
\item  $\L$ is a weakly reversible linkage class with $S,\tilde{S} \in \S(\L)$ (where we allow $S = \tilde{S}$) such that $S+\tilde{S} \in \C$.
\item There is a directed path within the reaction graph beginning with $S+\tilde{S}$ and ending with a unary  or the zero complex.
\end{enumerate}
Then for any tier-sequence $\{x_n\}$ the following holds: if there is a complex $y$ in the linkage class $\L$ (i.e. $y\in \C(\L)$) that is in $\Td1$, then $\Des \neq \emptyset$.
\end{lem}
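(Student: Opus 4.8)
The plan is to argue that if some complex $y \in \C(\L)$ lies in $\Td1$, then in fact the entire linkage class $\L$ — or at least enough of it — must have a complex in $\Td1$ from which a descending reaction emanates. First I would use Lemma \ref{lem51}, result 2, to note that $\Td1$ consists only of binary complexes and contains at least one double complex. Since $y\in \C(\L)\cap \Td1$, result 2 of Lemma \ref{lem51} gives $y = S'+\tilde S'$ for some species $S', \tilde S'$, and result 1 of Lemma \ref{lem51} gives $2S', 2\tilde S' \in \Td1$ with $x_{n,i}\to\infty$ along the relevant coordinates. The key observation is that the ratios $(x_{n,i}\vee 1)/(x_{n,j}\vee 1)$ are all bounded between two positive constants whenever $S_i+S_j\in\Td1$; I would want to propagate this "same-order" relation through the weakly reversible linkage class $\L$ to conclude that $S+\tilde S\in\Td1$, where $S,\tilde S$ are the species in hypothesis 1 whose sum-complex starts the path of hypothesis 2.

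The main step is therefore a connectivity argument inside $\L$: because $\L$ is weakly reversible (hence strongly connected as a directed graph) and every complex of $\L$ appearing in $\Td1$ is binary, I would show that $\Td1 \cap \C(\L)$ is "closed" within $\C(\L)$ in the sense that if one complex of $\L$ is in $\Td1$, then all complexes of $\L$ reachable from it via reactions $y\to y'$ with $y\sim_D y'$ stay in $\Td1$; combined with the fact (from Corollary \ref{cor33}-type reasoning, or directly from the tier definitions) that a reaction $y\to y'$ out of a $\Td1$ complex is either descending ($y'\notin\Td1$) or keeps $y'\in\Td1$. So either we already find a descending reaction out of some complex of $\L\cap\Td1$ — in which case $\Des\neq\emptyset$ and we are done — or every reaction internal to $\L$ preserves membership in $\Td1$, which by strong connectivity forces $\C(\L)\subset\Td1$; in particular $S+\tilde S\in\Td1$. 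In the latter case, hypothesis 2 supplies a directed path from $S+\tilde S$ to a unary complex or $\emptyset$; by Lemma \ref{lem51} result 2 that terminal complex is not in $\Td1$, so somewhere along that path there is a first reaction leaving $\Td1$, which is a descending reaction, giving $\Des\neq\emptyset$.

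The hard part will be making the "propagation through $\L$" step fully rigorous: I need to be careful that weak reversibility of $\L$ as a \emph{network} notion (strong connectivity of its reaction graph) genuinely lets me walk both forward and backward along reactions while controlling the tier of each complex encountered, since a priori a reaction $y\to y'$ with $y\in\Td1$ could have $y'$ in a strictly lower tier (that is exactly a descending reaction) — but that is the good case. The genuinely delicate point is ruling out that $y'$ could be in a \emph{higher} tier than $y$, i.e.\ $y'\succ_D y$ with $y\in\Td1$; this is impossible by the very definition of $\Td1$ as the top tier, so in fact every reaction out of a $\Td1$ complex lands in $\Td1$ or strictly below, which is precisely the dichotomy I need. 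Once that is pinned down, the argument is a short graph-walk: if no descending reaction exists inside $\L$, strong connectivity forces all of $\C(\L)$ into $\Td1$, and then the path of hypothesis 2 must exit $\Td1$, producing a descending reaction. I would close by remarking that this descending reaction may lie outside $\L$ (along the path), which is fine since the conclusion only asserts $\Des\neq\emptyset$.
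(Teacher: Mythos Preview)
Your overall dichotomy is the same as the paper's: either some reaction inside $\L$ already drops out of $\Td1$ (so $\Des\neq\emptyset$), or else $\C(\L)\subset\Td1$, and then the directed path of hypothesis~2 from $S+\tilde S$ to a non-binary complex must contain a descending reaction. That skeleton is correct.

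However, there is a genuine gap at the step ``in particular $S+\tilde S\in\Td1$'' following from $\C(\L)\subset\Td1$. Hypothesis~1 says only that $S,\tilde S\in\S(\L)$ and $S+\tilde S\in\C$; it does \emph{not} say $S+\tilde S\in\C(\L)$. In the paper's own illustrative example, $\L$ is the linkage class $A+C\rightleftharpoons B+C$, the species $A,B\in\S(\L)$, but $A+B$ lives in a \emph{different} linkage class. So ``$\C(\L)\subset\Td1$, in particular $S+\tilde S\in\Td1$'' is simply not a valid inference, and your graph-walk inside $\L$ cannot reach $S+\tilde S$.

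What is actually needed here is the argument you alluded to in your first paragraph but did not carry out: use double-fullness. From $\C(\L)\subset\Td1$ and result~2 of Lemma~\ref{lem51}, every complex of $\L$ is binary; since $S\in\S(\L)$, there is some $S+S_m\in\C(\L)\subset\Td1$, and then result~1 of Lemma~\ref{lem51} gives $2S\in\Td1$. The same reasoning gives $2\tilde S\in\Td1$. Now the geometric-mean identity
\[
\frac{(x_n\vee 1)^{S+\tilde S}}{(x_n\vee 1)^{S+S_m}}
=\frac{(x_{n,\tilde S}\vee 1)}{(x_{n,S_m}\vee 1)}
=\sqrt{\frac{(x_n\vee 1)^{2\tilde S}}{(x_n\vee 1)^{2S_m}}}
\]
has a finite positive limit (both $2\tilde S$ and $2S_m$ are in $\Td1$), so $S+\tilde S\sim_D S+S_m$ and hence $S+\tilde S\in\Td1$. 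With this repair your argument goes through and coincides with the paper's proof.
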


We demonstrate the lemma with an example.

\begin{example}
Consider the following reaction network
\begin{align*}
	&2A \leftrightharpoons 2B \leftrightharpoons 2C \leftrightharpoons 2D\\
	&A + C \leftrightharpoons B + C\\
	&A + B \leftrightharpoons  2F \leftrightharpoons \emptyset \to 2E.
\end{align*}
Let $\L$ be the middle linkage class (i.e., $A + C \leftrightharpoons B + C$).  Then, $A ,B \in \S(\L)$, and there is a directed path from $A + B$ to $\emptyset$, showing that conditions 1 and 2 are met.  Hence, we conclude that if $\{x_n\}$ is a tier-sequence and either $A + C$ or $B+C$ are in $\Td1$, then we necessarily have that $D_{\{x_n\}} \ne 0$.  In this case, the descending reaction could be in any of the three linkage classes (depending upon the particular sequence $\{x_n\}$). \hfill $\triangle$
\end{example}

\begin{proof} 
Let $\S=\{S_1,S_2,\dots,S_d\}$.  Assume that $\{x_n\}$ is a tier sequence and that there is a  $y \in \C(\mathcal L)$ such that $y \in \Td1$.  We must show that $D_{\{x_n\}} \ne \emptyset$.

\vspace{.1in}

\noindent \textbf{Case 1.}  If there is a complex $y' \in C(\mathcal{L})$ that is not in $\Td1$, then there necessarily  exits a descending reaction along $\{x_n\}$ by the weak reversibility of $\L$. 

\vspace{.1in}

\noindent \textbf{Case 2.}
Now suppose that all complexes in $C(\mathcal{L})$ are in $T^{D,1}_{\{x_n\}}$.
We  will show $S+\tilde{S} \in \Td1$. We assume  $S=S_i$ and $\tilde{S}=S_j$ for some $i$ and $j$ (where, again, we could have $i = j$).
 By  result 2 in \lemref{lem51}, $\C(\L)$ contains only binary complexes. Thus $S_i+S_m \in \C(\L)$ for some $m$. Indices $i,j$ and $m$ are not necessarily all distinct. By  result 1 in Lemma \ref{lem51}, $\{2S_i,2S_m,2S_j\} \subset \Td1$. Therefore
\[ \lmt \frac{(x_n\vee 1)^{S_i+S_j}}{(x_n\vee 1)^{S_i+S_m}} = \lmt \frac{(x_{n,j}\vee 1)}{(x_{n,m}\vee 1)} =\sqrt{\lmt \frac{(x_{n}\vee 1)^{2S_j}}{(x_{n}\vee 1)^{2S_m}} } = C \quad \text{for some $C>0$}.\]
Therefore $S_i+S_j \in \Td1$. By  hypothesis 2, there exists a directed path from $S_i+S_j$ to a unary complex or the zero complex within the reaction graph. Since only binary complexes can be in $\Td1$ in a  double-full reaction network, there exists a descending reaction along $\{x_n\}$ within the directed path.
\end{proof}

\begin{thm}\label{thm53}
Let $(\S,\C,\Re)$ be a double-full, binary reaction network with linkage classes $\L_1,\L_2,\dots,\L_\ell$. Suppose there is an integer $m \in \{1,2,\dots,\ell-1\}$ such that:
\begin{enumerate}
\item For $i \le m$, $\L_i$ is weakly reversible and $\C(\L_i)$ contains only binary complexes.
\item For each $i\le m$, there exists $S,\tilde{S} \in \S(\mathcal{L}_i)$ (where we allow $S \neq \tilde{S}$) such that 
\begin{enumerate}
\item[(i)]   $S+\tilde{S} \in \C$,
\item[(ii)] there is a directed path within the reaction graph beginning with $S+\tilde{S}$ and ending with a unary  or the zero complex.
\end{enumerate}
\item For each double complex $2S$, either $2S \in \C(\L_i)$ for some $i  \le m$ or there is a directed path within the reaction graph beginning with $2S$ and ending with a unary complex or the zero complex.
\end{enumerate}
Then, for any choice of rate constants, the Markov process with intensity function \eqref{mass} associated to the reaction network $(\S,\C,\Re)$ satisfies the following: each state in a closed, irreducible component of the state space is positive recurrent; moreover, if  $\tau_{x_0}$ is the time for the process to enter the union of the closed irreducible components given an initial condition  $x_0$, then $\mathbb{E}[\tau_{x_0}] < \infty$.
\end{thm}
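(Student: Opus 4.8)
The plan is to verify the hypotheses of Corollary~\ref{cor33} for $(\S,\C,\Re)$; that is, for every tier-sequence $\{x_n\}$ I must show $\Td1 = \Ts1$ and exhibit a descending reaction, so that $\Des \neq \emptyset$. The first requirement is immediate: result~3 of Lemma~\ref{lem51} gives $\Td1 = \Ts1$ along any tier-sequence, since the network is double-full and binary. Hence the entire argument reduces to producing, for an arbitrary tier-sequence $\{x_n\}$, a descending reaction.

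Fix such a tier-sequence. By result~2 of Lemma~\ref{lem51}, $\Td1$ consists only of binary complexes and contains at least one double complex; denote it $2S_{i_0} \in \Td1$. Let $\L_j$ be the unique linkage class with $2S_{i_0} \in \C(\L_j)$. I split into two cases according to whether $j \le m$ or $j > m$. If $j \le m$, then hypotheses~1 and~2 of the theorem say precisely that $\L_j$ is weakly reversible and that there exist species $S, \tilde S \in \S(\L_j)$ with $S + \tilde S \in \C$ admitting a directed path to a unary or the zero complex; these are exactly the hypotheses of Lemma~\ref{lem53} applied with $\L = \L_j$. Since $2S_{i_0}$ is a complex of $\L_j$ lying in $\Td1$, Lemma~\ref{lem53} gives $\Des \neq \emptyset$. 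If instead $j > m$, then $2S_{i_0}$ belongs to no $\L_i$ with $i \le m$, so hypothesis~3 supplies a directed path $2S_{i_0} = c_0 \to c_1 \to \cdots \to c_k = y'$ in the reaction graph with $y'$ a unary or the zero complex. By result~2 of Lemma~\ref{lem51}, $y' \notin \Td1$, whereas $c_0 \in \Td1$; hence there is an index $\ell$ with $c_\ell \in \Td1$ and $c_{\ell+1} \notin \Td1$, so $c_\ell \to c_{\ell+1}$ is a descending reaction and again $\Des \neq \emptyset$.

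In both cases $\Des \neq \emptyset$ and $\Td1 = \Ts1$ along $\{x_n\}$, so Corollary~\ref{cor33} applies and yields the stated conclusion. The proof is essentially an assembly of earlier results, so I do not anticipate a serious analytic obstacle; the one subtlety — and it is a mild one — is that we do not get to choose which double complex lands in $\Td1$, as that is dictated by Lemma~\ref{lem51}, so the argument must be prepared for either location of that complex: inside one of the weakly reversible linkage classes $\L_i$ with $i \le m$, where Lemma~\ref{lem53} applies directly, or inside a later linkage class, where condition~3 furnishes an explicit path down to a lower tier. Neither branch requires any estimate beyond what is already packaged into Lemmas~\ref{lem51} and~\ref{lem53}, whose proofs carry the real content.
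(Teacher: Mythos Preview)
Your proof is correct and follows essentially the same approach as the paper's: establish $\Td1=\Ts1$ via Lemma~\ref{lem51}, locate a double complex $2S_{i_0}\in\Td1$, and then use either the directed path from hypothesis~3 or Lemma~\ref{lem53} to produce a descending reaction before invoking Corollary~\ref{cor33}. The only cosmetic difference is that the paper splits cases on whether a directed path from $2S_{i_0}$ to a non-binary complex exists (if not, hypothesis~3 forces $2S_{i_0}\in\C(\L_i)$ for some $i\le m$), whereas you split on whether the linkage class index is $\le m$ or $>m$; the two are logically equivalent.
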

 
\begin{proof}
Let $\{x_n\}$ be a tier-sequence. Result 3 in Lemma \ref{lem51} shows $T^{D,1}_{\{x_n\}} = T^{S,1}_{\{x_n\}}$.  Thus, so long as a descending reaction can be shown to exist, an application of Corollary \ref{cor33} will complete the proof.

By  result 2 in Lemma \ref{lem51}, $2S_j \in T^{D,1}_{\{x_n\}}$ for some $j$. If there is a directed path beginning with $2S_j$ and ending with a unary complex or the zero complex within the reaction graph, we have a descending reaction along $\{x_n\}$ within the directed path. Otherwise, $2S_j \in \C(\L_i)$ for some $i \le m$ by the hypothesis, and hence $\Des \neq \emptyset$ by \lemref{lem53}.
\end{proof} 

We demonstrate Theorem \ref{thm53} with an example.

\begin{example}
Consider the following, which is a double-full, binary reaction network for which the conditions in the \thmref{thm53} hold.
\begin{align*}
&A+B\rightleftharpoons 2B \rightleftharpoons A+B \\
&2D \rightleftharpoons 2C \rightleftharpoons A+D \\
&2A \rightarrow B+C \rightleftharpoons A\\
&C+D \rightarrow \emptyset \rightleftharpoons D.
\end{align*}
Note that Theorem \ref{thm52} stands silent on this model as there is no reaction path beginning with $2B$ and ending with a unary complex or $\emptyset$.

Let $\L_1,\L_2,\L_3$ and $\L_4$ be the linkage classes of this reaction network in order from top to bottom.
We demonstrate that the assumptions of Theorem \ref{thm53} are fulfilled with $m = 2$.
\begin{enumerate}
\item The linkage classes $\L_1$ and $\L_2$ contain only binary complexes and are weakly reversible. 
\item
\begin{enumerate}
\item[(i)] For linkage class $\L_1$, we take $S = \tilde S = A$, and note the path from $2A$ to $A$ in $\L_3$.  
\item[(ii)] For linkage class $\L_2$, we take $S = C$ and $\tilde S = D$, and note the reaction $C+D \to \emptyset$ in $\L_4$.
\end{enumerate}
\item  We note $2B\in \L_1$ and $2C,2D \in \L_2$.  Also, there is a path from $2A$ to $A$ in $\L_3$.  
 \end{enumerate}
\hfill $\triangle$
\end{example}

Since weak reversibility guarantees the existence of a directed path between two complexes within any linkage class, we can modify the conditions in \thmref{thm53}.

\begin{cor}
Let $(\S,\C,\Re)$ be a weakly reversible, double-full, binary reaction network with linkage classes $\L_1,\L_2,\dots,\L_\ell$. Let $m \in \{1,\dots,\ell-1\}$ and suppose the following:
\begin{enumerate}
\item $\C(\L_i)$  contains only binary complexes for each $i \le m$ and $\C(\L_{i})$ contains at least one non-binary complex for each $i > m$. 
\item For each $i \le m$, there exist species $S, \tilde{S} \in \C(\L_i)$ such that $S+\tilde{S} \in \C(\L_j)$ for some $j > m$. 
\end{enumerate}

Then, for any choice of rate constants, every state of the Markov process with intensity function \eqref{mass} associated to the reaction network $(\S,\C,\Re)$ is positive recurrent.
\end{cor}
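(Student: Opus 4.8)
The plan is to derive the corollary directly from \thmref{thm53} by checking that its three hypotheses follow from the present assumptions. The single structural observation that makes everything go through is that in a \emph{binary} reaction network any complex that is not binary must be a unary complex or the zero complex; combined with weak reversibility of each linkage class (which guarantees a directed path between any two of its complexes), this supplies all of the reaction paths that \thmref{thm53} requires.

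First I would verify hypothesis~1 of \thmref{thm53}: for $i\le m$ the linkage class $\L_i$ is weakly reversible, because the whole network is, and $\C(\L_i)$ consists only of binary complexes, which is part of hypothesis~1 of the corollary. Next, hypothesis~2 of \thmref{thm53}: fix $i\le m$ and take species $S,\tilde S\in\S(\L_i)$ with $S+\tilde S\in\C(\L_j)$ for some $j>m$, as provided by hypothesis~2 of the corollary; then $S+\tilde S\in\C$, which is 2(i). For 2(ii), hypothesis~1 of the corollary says $\C(\L_j)$ contains a non-binary complex $z$, and since the network is binary, $z$ is a unary complex or the zero complex; as $S+\tilde S$ and $z$ both lie in the weakly reversible linkage class $\L_j$, there is a directed path in the reaction graph from $S+\tilde S$ to $z$, giving 2(ii).

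Then I would verify hypothesis~3 of \thmref{thm53}. Let $2S$ be any double complex; since the network is double-full, $2S\in\C$, so it belongs to a unique linkage class, say $\L_i$. If $i\le m$ we are in the first alternative of hypothesis~3. If $i>m$, then by hypothesis~1 of the corollary $\C(\L_i)$ contains a non-binary complex $z$, which (the network being binary) is unary or the zero complex, and weak reversibility of $\L_i$ produces a directed path from $2S$ to $z$, the second alternative. With all three hypotheses of \thmref{thm53} in place, that theorem yields positive recurrence of every state lying in a closed, irreducible component of $\Z^d_{\ge 0}$, together with $\mathbb E[\tau_{x_0}]<\infty$. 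Finally, weak reversibility of the network implies that every state is contained in a closed, irreducible component (exactly as used in the proof of \thmref{thm:main1}), so in fact every state is positive recurrent.

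I do not expect a genuine obstacle: the substance is entirely in \thmref{thm53} (and, behind it, \lemref{lem53} and \lemref{lem51}), and the corollary merely repackages those hypotheses using weak reversibility to manufacture the needed reaction paths. The only points that require (minor) care are the remark that "non-binary" forces "unary or zero complex" in a binary network, so that the path endpoints demanded by \thmref{thm53} actually exist inside the linkage classes $\L_i$ with $i>m$, and the standard fact that weak reversibility makes every state recurrent, which upgrades the conclusion from "states in closed, irreducible components" to "all states."
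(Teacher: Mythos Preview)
Your proposal is correct and follows exactly the route the paper intends: the paper states this corollary immediately after \thmref{thm53} with only the remark that weak reversibility supplies the required directed paths, and your argument simply fills in those details, verifying hypotheses~1--3 of \thmref{thm53} and then using weak reversibility to conclude that every state lies in a closed, irreducible component.
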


Now we will provide another class of double-full, binary reaction networks in which we will assume the existence of out-flows.

\begin{thm} \label{thm61}
Let $(\S,\C, \Re)$ be a double-full, binary reaction network with linkage classes  $\mathcal{L}_1,\mathcal{L}_2,\dots,\mathcal{L}_\ell$.  Suppose there is an $m \in \{1,\dots, \ell-1\}$ such that the following three conditions hold:
\begin{enumerate}
\item For each $i \le m$,  $\L_i$ is weakly reversible and $\C(\L_i)$ contains only binary complexes.
\item For each $i > m$, $C(\L_i)$ contains no binary complex.
\item For each $i \le m$, there exists an $S \in \S(\L_i)$ such that $S \to \emptyset \in \Re$. 
\end{enumerate} 
Then, for any choice of rate constants,  the Markov process with intensity functions \eqref{mass} associated to the reaction network $(\S,\C,\Re)$ satisfies the following: each state in a closed, irreducible component of the state space is positive recurrent; moreover, if  $\tau_{x_0}$ is the time for the process to enter the union of the closed irreducible components given an initial condition  $x_0$, then $\mathbb{E}[\tau_{x_0}] < \infty$.
\end{thm}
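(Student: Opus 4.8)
The plan is to follow the now-familiar template: given an arbitrary tier-sequence $\{x_n\}$, show that either a descending reaction exists with its source in $\Ts1$, or else invoke Corollary \ref{cor33}. Since Result 3 of Lemma \ref{lem51} already gives $\Td1 = \Ts1$ for any double-full binary network, it suffices to prove $\Des \neq \emptyset$ for every tier-sequence; then Corollary \ref{cor33} applies directly. So the whole argument reduces to: \emph{for any tier-sequence $\{x_n\}$, there is a descending reaction along $\{x_n\}$.}

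First I would use Result 2 of Lemma \ref{lem51} to fix a double complex $2S_j \in \Td1$ for some $j$, where also $\lmt x_{n,j} = \infty$ by Lemma \ref{lem : Td1 goes infinity}. Now split into two cases according to which linkage class $\L_k$ contains $2S_j$. If $k > m$, then by hypothesis 2, $\C(\L_k)$ contains no binary complex; since $\L_k$ contains the binary complex $2S_j$, this is a contradiction — so in fact $2S_j$ must lie in some $\L_k$ with $k \le m$. (Alternatively, one handles the $k>m$ case by noting $\C(\L_k)$ then consists only of unary/zero complexes, none of which can be in $\Td1$ by Result 2 of Lemma \ref{lem51}, contradicting $2S_j \in \Td1$ — but since $2S_j$ itself is binary and in $\L_k$, the cleaner route is simply that $2S_j \in \L_k$ forces $k \le m$.) So we may assume $2S_j \in \C(\L_k)$ with $k \le m$.

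Next, since $\L_k$ is weakly reversible (hypothesis 1) and $k \le m$, I would look at the species $S \in \S(\L_k)$ with $S \to \emptyset \in \Re$ guaranteed by hypothesis 3; write $S = S_p$. Because $S_p \in \S(\L_k)$, there is a complex in $\C(\L_k)$ containing $S_p$; by hypothesis 1 that complex is binary, say $S_p + S_q \in \C(\L_k)$. By Result 1 of Lemma \ref{lem51}, since $2S_j \in \Td1$ and (again by Result 1 applied inside $\L_k$, or by chaining ratio limits through the binary complexes of $\L_k$) all the double complexes $2S_r$ with $r$ appearing in $\C(\L_k)$ are in $\Td1$ — here one needs that the binary complexes of $\L_k$ "connect" the relevant indices, which follows from $\L_k$ being a single connected component together with Result 1. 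In particular $2S_p \in \Td1$, hence $x_{n,p}\to\infty$, hence $\lambda_{S_p}(x_n)\to\infty$ and, by Corollary \ref{cor23} together with $\Td1=\Ts1$, we get $S_p \in \Ts1$. Finally, $S_p \to \emptyset$ is a reaction whose source $S_p$ is in $\Td1$ and whose product $\emptyset$ is \emph{not} in $\Td1$ (Lemma \ref{lem : Td1 goes infinity}); therefore $S_p \to \emptyset$ is a descending reaction and $S_p \in \Des \cap \Ts1$. Hence $\Des \neq \emptyset$ and Corollary \ref{cor33} finishes the proof.

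The main obstacle is the step asserting that \emph{every} double complex $2S_r$ with $S_r \in \S(\L_k)$ lies in $\Td1$ once one such $2S_j$ does. Result 1 of Lemma \ref{lem51} gives this for two species that appear \emph{together} in a binary complex; to propagate it across all of $\S(\L_k)$ one must use that $\L_k$ is connected and that, by hypothesis 1, every edge of $\L_k$ joins two binary complexes, so consecutive complexes along a path in $\L_k$ share a species — chaining Result 1 along such a path then transfers membership in $\Td1$ step by step. Making this chaining precise (and handling the degenerate possibility $\C(\L_k) = \{2S_j\}$ with no other complex, in which case $S_j$ itself must satisfy hypothesis 3 and the argument is immediate) is the only delicate part; everything else is a direct reprise of the proofs of Theorems \ref{thm52} and \ref{thm53}.
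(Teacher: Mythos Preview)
Your argument has a genuine gap at the decisive step: you conclude that $S_p \to \emptyset$ is a descending reaction because ``$S_p$ is in $\Td1$''. But this is false. By Result 2 of Lemma \ref{lem51}, in a double-full binary network $\Td1$ consists \emph{only} of binary complexes; the unary complex $S_p$ can never lie in $\Td1$, regardless of how large $x_{n,p}$ is. (Your appeal to Corollary \ref{cor23} to place $S_p$ in $\Ts1$ fails for the same reason: that corollary has $y\in\Td1$ as a hypothesis.) Consequently $S_p\to\emptyset$ is not a descending reaction, and your reduction to Corollary \ref{cor33} collapses.

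This is not a repairable detail: there are tier-sequences along which $\Des=\emptyset$. Take the toy network $2A\rightleftharpoons 2B$, $A\to\emptyset$ (which satisfies all three hypotheses with $m=1$) and $x_n=(n,n)$. Then $\Td1=\{2A,2B\}$, every reaction in $\L_1$ stays inside $\Td1$, and the only other reaction $A\to\emptyset$ has source $A\notin\Td1$. There is no descending reaction, so Corollary \ref{cor33} is simply inapplicable. The paper confronts exactly this obstruction: it abandons the pure $V$ approach and uses the augmented Lyapunov function $V+T$ with $T(x)=(\vec 1\cdot x)^{2+\delta}$. In the case where all of $\C(\L_k)\subset\Td1$ (your implicit situation), the out-flow $S_p\to\emptyset$ contributes the dominant negative term to $\mathcal A T$, of order $-x_{n,p}^{2+\delta}$, which beats the at-most $x_{n,p}^2\ln x_{n,p}$ growth of $\mathcal A V$. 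In the complementary case a genuine descending reaction exists inside $\L_k$, and one shows $\mathcal A T$ is dominated by $\mathcal A V$. The chaining issue you flagged as the ``main obstacle'' is in fact the easy part; the missing idea is the second Lyapunov function.
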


\begin{proof} 
Let $\S =\{S_1,S_2,\dots,S_d\}$ and for some $0<\delta<1$ let 
\[
T(x) = (x_1+x_2+\cdots + x_d)^{2+\delta} = (\vec 1 \cdot x)^{2+\delta},
\]
where $\vec{1}=(1,1,\cdots,1)^T \in \Z^d_{\ge0}.$
 Let $\{x_n\}$ be a tier-sequence of $(\S,\C,\Re)$.  We will show that 
\[
	\lim_{n\to \infty} \A(V+T)(x_n)=-\infty.
	\]
 An application of \thmref{thm11} then completes the proof. 
 
  We begin by finding relevant upper bounds for $\A T(x_n)$ in a similar fashion as Lemma \ref{lemma:main}.
  First, we define 
  \[
  I=\{ i \ | \ \lmt x_{n,i}=\infty\}, \quad U=\{i \ |\ S_i \to \emptyset \in \Re\}, \quad \text{and} \quad V=\{i \ | \ \emptyset \to S_i \in \Re\}.
  \]

   Since $(1+h)^{2+\delta} = 1+(2+\delta)h+o(h)\le 1+3h$ for $h$ small enough, there is a positive constant $K$ such that, for large $n$
\begin{align}
&\mathcal{A}T(x_n) =\sum_{y\rightarrow y'\in\Re}\lambda_{y\rightarrow y'}(x_n)\big( (\vec{1}\cdot x_n+\vec{1}\cdot (y'-y))^{2+\delta}-(\vec{1}\cdot x_n) ^{2+\delta}\big ) \notag\\ 
&= (\vec{1}\cdot x_n)^{2+\delta}\sum_{y\rightarrow y'\in\Re}\lambda_{y\rightarrow y'}(x_n)\left( \left( 1+\frac{\vec{1}\cdot (y'-y)}{\vec{1}\cdot x_n}\right )^{2+\delta}-1 \right) \notag \\
&\le 3(\vec{1}\cdot x_n)^{2+\delta} \sum_{y\to y' \in \Re} \lambda_{y\to y'}(x_n) \left( \frac{\vec{1} \cdot (y'-y)}{\vec{1}\cdot x_n} \right)\notag\\
& = 3(\vec{1}\cdot x_n)^{1+\delta} \sum_{y\to y' \in \Re} \lambda_{y \to y'}(x_n)(\vec{1}\cdot (y'-y)) \notag\\
&= 3(\vec{1}\cdot x_n)^{1+\delta}\left(\sum_{i \in U}\lambda_{S_i\to \emptyset}(x_n)(-1)+\sum_{i \in V}\lambda_{\emptyset\to S_i}(x_n)\right) \notag\\
&\le 3(\vec{1}\cdot x_n)^{1+\delta}\left(-\sum_{i \in U\cap I}\kappa_{S_i\to \emptyset}x_{n,i}+K \right ) \label{eq : newlya2}
\end{align}
Therefore, if there is $k \in U\cap I$ such that
\[\lmt \dfrac{(x_{n,k}\vee 1)}{(x_{n,i}\vee 1)} \ \ \ \text{exists} \ \ \text{and} \ \ \lmt \frac{(x_{n,k}\vee 1)}{(x_{n,i}\vee 1)} > 0 \]
 for all $i\in \{1,2,3,\dots, d\}$, then there is a constant $K'>0$ such that
\begin{align}\label{eq:newlya}
\mathcal{A}T(x_n) \le -K'x_{n,k}^{2+\delta}
\end{align}
for large $n$.  Hence, we have found our bound on $\A T$, and we  turn to  $\A(T+V)$.

Note that by result 2 in \lemref{lem51} there is an $i$ for which $2S_i \in \Td1$.  Without loss of generality, we assume $i = 1$ and $2S_1 \in \C(\L_1)$.
There are two cases to consider:
(i) $\C(\L_1) \subseteq \Td1$ and (ii) there is a complex $y' \in \C(\L_1)$ such that $y'\not \in \Td1$. 

\vspace{.1in}

\noindent \textbf{Case 1.} Suppose $\C(\L_1) \subseteq \Td1$. Then by  hypothesis 3, there exists a species, say $S_k$, such that $S_k \in \S(\L_1)$ and   $S_k \rightarrow \emptyset\in \Re$ . Note that $S_k+S_j \in C(\mathcal{L}_1) \subseteq \Td1$ for some $j$ (where we allow $k= j$) because $C(\mathcal{L}_1)$ only contains binary complexes. By  result 1 in Lemma \ref{lem51}, $2S_k \in \Td1$ and hence $k\in U\cap I$ by \lemref{lem : Td1 goes infinity}. Since $2S_k \in \Td1$ and the network is double-full,  for all $i=1,2,3,\dots, d$ we have
\[\lmt \dfrac{(x_{n,k}\vee 1)}{(x_{n,i}\vee 1)} \ \ \ \text{exists} \ \ \text{and} \ \ \lmt \frac{(x_{n,k}\vee 1)}{(x_{n,i}\vee 1)} > 0.
 \]
Hence, by \eqref{eq:newlya} there is a constant $K'>0$ such that for large $n$, 
\begin{align}\mathcal{A}T(x_n) \le -K'x_{n,k}^{2+\delta} 
 \label{eq:asym of AT}
\end{align}

We now turn to $\A V$.  Note that $\Td1=\Ts1$ by  result 3 in \lemref{lem51}. Since $2S_k \in \Td1=\Ts1$ there is a constant $K''>0$ such that 
\begin{align}\label{eq : asym of Ts1 and Td1 for double}
\lmt \frac{\lambda_y(x_n)}{\lambda_{2S_k}(x)} \le K'' \quad \text{and} \quad 
\lmt\frac{(x_n\vee 1)^y}{(x_n\vee 1)^{2S_k}} \le K''
\end{align}
for any complex $y\in\C$. Now applying  \lemref{lemma:main} and \eqref{eq : asym of Ts1 and Td1 for double}, we may conclude that there are positive constants $C$ and $C'$ such that
\begin{align*}
\mathcal{A}V(x_n)  \le \sum_{y\rightarrow y'\in  \Re}  \kappa_{y\to y'}\lambda_{y}(x_n) \left( \ln{\frac{(x_n\vee 1)^{y'}}{(x_n\vee 1)^y}} + C \right ) 
\le C'\lambda_{2S_k}(x_n) \ln{(x_{n,k}^2)}
\end{align*} 
for large $n$. Hence, combining our estimates for $\A T$ and $\A V$,
 \begin{align*}
\mathcal{A}(V+T)(x_n) \le C'\lambda_{2S_k}(x_n)\ln{(x_{n,k}^2)} - K'x_{n,k}^{2+\delta} \rightarrow -\infty, \quad \text{as} \ \ n\to \infty.
\end{align*} 

\vspace{.1in}

\noindent \textbf{Case 2.} We now suppose that there is a complex $y' \in \C(\L_1)$ such that $y'\not \in \Td1$. Since $\L_1$ is weakly reversible, there exists a directed path beginning with $2S_1$ and ending with $y'$. Thus there is a descending reaction $y_0 \to y_0'$ along $\{x_n\}$ within the directed path. Note that $y_0 \in \Ts1$ because $\Td1=\Ts1$ by  result 3 in \lemref{lem51} and, hence, $y_0 \in \Ts1 \cap \Des$. Since terms $I,III,IV$ in \eqref{eq:main eq for AV} are uniformly bounded in $n$ and term $II$ converges to $-\infty$, as $n \to \infty$,
\begin{align}\label{eq : AV less -lambda}
\A V(x_n) \le -\lambda_{y_0}(x_n) \quad \text{for large $n$}.
\end{align}

By \lemref{lem22} and the fact that $y_0 \in \Td1=\Ts1$, there is a constant $C\ge 0$ such that for any $ k \in I$,
\[
 \lmt \frac{x_{n,k}}{\sqrt{\lambda_{y_0}(x_n)}} = \lmt \sqrt{\frac{(x_{n,k}\vee 1)^{2S_k}}{\lambda_{y_0}(x_n)}} =
\lmt \sqrt{\frac{(x_{n,k}\vee 1)^{2S_k}}{(x_n\vee 1)^{y_0}}}
\sqrt{\frac{(x_n\vee 1)^{y_0}}{\lambda_{y_0}(x_n)}} = C.  
 \]
Therefore, there is a constant $C' > 0$ such that
\begin{align}\label{eq : AT is bounded by AV}
(\vec{1}\cdot x_n)^{1+\delta} = (x_{n,1}+x_{n,2}+\dots+x_{n,d})^{1+\delta} \le C'' \lambda_{y_0}(x_n)^{(1+\delta)/2}.
\end{align}
Note that $\lambda_{y_0}(x_0) \to \infty$, as $n\to \infty$ by Corollary \ref{cor23}. Then by \eqref{eq : newlya2},\eqref{eq : AV less -lambda} and \eqref{eq : AT is bounded by AV}, there are constants $C''' >0$ such that
\begin{align*}
\A(V+T)(x_n) \le -\lambda_{y_0}(x_n)+C'''\lambda_{y_0}(x_n)^{(1+\delta)/2} \to \infty, \quad \text{as } \ n \to \infty,
\end{align*}
because $\delta < 1$.
\end{proof}
We demonstrate \thmref{thm61} with an example.
\begin{example}
Consider the following double-full, binary reaction network with 16 reactions.
\begin{align*}
&2A \xrightleftharpoons[\kappa_2]{\kappa_1} A+B \xrightleftharpoons[\kappa_4]{\kappa_3} 2D\\
&2B \xrightleftharpoons[\kappa_6]{\kappa_5} A+D \xrightleftharpoons[\kappa_8]{\kappa_7} C+B\\
&C+D \xrightleftharpoons[\kappa_{10}]{\kappa_9} 2C \xrightleftharpoons[\kappa_{12}]{\kappa_{11}} A+C\\
&B \xrightarrow{\kappa_{13}} \emptyset \xrightleftharpoons[\kappa_{15}]{\kappa_{14}} C \\
&A \xrightarrow{\kappa_{16}} D
\end{align*}
Let $\L_1,\dots, \L_5$ be the linkage classes of this reaction network in order from top to bottom.  We verify the conditions of Theorem \ref{thm61} with $m = 3$.
\begin{enumerate}
\item   $\mathcal{L}_1$, $\mathcal{L}_2$, and $\mathcal{L}_3$ are weakly reversible and contain only binary complexes.
\item   $\mathcal{L}_4$ and $\L_5$ do not contain binary complexes.
\item  Note that species  $B \in S(\mathcal{L}_1)$, $B \in S(\mathcal{L}_2)$ and $C \in S(\mathcal{L}_3)$ satisfy the third condition of the theorem. 
\end{enumerate}
Moreover, $\mathbb{S} = \Z^4_{\ge 0}$ is irreducible.  Therefore the associate continuous-time Markov chain for this reaction network is positive recurrent for any choice of rate constants $\kappa_1,\kappa_2,\dots,\kappa_{16}$.
  \hfill $\triangle$
\end{example}

\section{Discussion}
\label{sec:discussion}
We introduced two main  network conditions, each of which guarantee that each state in a closed, irreducible component of the state space is positive recurrent, and that, regardless of initial condition, all trajectories enter a closed, irreducible component in finite time.    The analysis was based on the idea of \textit{tiers} introduced in \cite{AndGAC_ONE2011,AndBounded_ONE2011}.  There are a number of avenues for future work.

First, we believe that \textit{all} systems whose reaction networks are weakly reversible are positive recurrent, and the present work grew out of an attempt to prove this.  Specifically, we believe the following to be true.

\vspace{.1in}

 C{\footnotesize ONJECTURE} (Positive Recurrence Conjecture).  \textit{Let $(\S,\C,\Re)$ be a weakly reversible reaction network.  Then, for any choice of rate constants, every state of the Markov process with intensity function \eqref{mass} associated to the reaction network $(\S,\C,\Re)$ is positive recurrent.}

\vspace{.1in}

Attempting to prove this conjecture, which is closely related to the Global Attractor Conjecture \cite{Craciun2015,CraciunShiu09} for deterministic models,  remains an active pursuit.

Second, a natural follow-up question is: how fast do the processes considered here converge to their stationary distributions.  Results related to this question will be presented in a forthcoming paper.

 \bibliographystyle{plain}
\bibliography{library}

\end{document}